\newcommand{\defi}[1]{{\upshape\sffamily #1}}
\renewcommand{\a}{\alpha}
\renewcommand{\b}{\beta}
\renewcommand{\c}{\gamma}
\renewcommand{\ll}{\lambda}
\renewcommand{\o}{\otimes}
\newcommand{\s}{\sigma}
\newcommand{\bb}[1]{\mathbb{#1}}
\renewcommand{\rm}[1]{\mathrm{#1}}
\newcommand{\mc}[1]{\mathcal{#1}}
\newcommand{\ccirc}[1]{\xymatrix@1{*+<1ex>[o][F-]{#1}}}
\def\ccone{\ccirc{1}}
\def\cctwo{\ccirc{2}}
\def\ccthree{\ccirc{3}}
\def\lra{\longrightarrow}
\newtheorem{theorem}{Theorem}[section]
\newtheorem{lemma}[theorem]{Lemma}
\newtheorem{conjecture}[theorem]{Conjecture}
\newtheorem{proposition}[theorem]{Proposition}
\newtheorem{corollary}[theorem]{Corollary}
\newtheorem{question}[theorem]{Question}
\newtheorem*{tmaincor}{Corollary \ref{thmmaincor}}
\newtheorem*{tmain}{Theorem \ref{thmmain}}
\theoremstyle{definition}
\newtheorem{definition}[theorem]{Definition}
\newtheorem*{definition*}{Definition}
\newtheorem{example}[theorem]{Example}
\theoremstyle{remark}
\newtheorem{remark}[theorem]{Remark}
\newtheorem*{remark*}{Remark}
\numberwithin{equation}{section}
\begin{document}

\title{$3\times 3$ Minors of Catalecticants}

\author{Claudiu Raicu}
\address{Department of Mathematics, Princeton University, Princeton, NJ 08544-1000\newline
\indent Institute of Mathematics ``Simion Stoilow" of the Romanian Academy}
\email{craicu@math.princeton.edu}

\subjclass[2000]{Primary 14M12}

\date{\today}

\keywords{Catalecticant matrices, Veronese varieties, secant varieties}

\begin{abstract} Secant varieties of Veronese embeddings of projective space are classical varieties whose equations are far from being understood. Minors of catalecticant matrices furnish some of their equations, and in some situations even generate their ideals. Geramita conjectured that this is the case for the secant line variety of the Veronese variety, namely that its ideal is generated by the $3\times 3$ minors of any of the ``middle'' catalecticants. Part of this conjecture is the statement that the ideals of $3\times 3$ minors are equal for most catalecticants, and this was known to hold set-theoretically. We prove the equality of $3\times 3$ minors and derive Geramita's conjecture as a consequence of previous work by Kanev.
\end{abstract}

\maketitle

\section{Introduction}

A folklore theorem (see \cites{gruson-peskine,eis-detl,conca}) states that the defining ideal of any secant variety of a rational normal curve is generated by the minors of a generic Hankel matrix, and that apart from trivial restrictions, it doesn't matter which Hankel matrix we choose to take the minors of. For example, consider a rational quartic curve $C$ in $\bb{P}^4$, the image of the embedding
\[[x:y]\longrightarrow[x^4:x^3y:x^2y^2:xy^3:y^4].\]
If we let $z_0,\cdots,z_4$ denote the coordinate functions on $\bb{P}^4$, then the relevant Hankel matrices are 
\[ \left[ \begin{array}{cccc}
z_0 & z_1 & z_2 & z_3 \\
z_1 & z_2 & z_3 & z_4 \\
\end{array} \right]\quad
\rm{and}\quad
\left[\begin{array}{ccc}
z_0 & z_1 & z_2 \\
z_1 & z_2 & z_3 \\
z_2 & z_3 & z_4 \\
\end{array} \right].
\]
The ideals of $2\times 2$ minors of the two matrices coincide and generate the ideal of $C$, while the determinant of the second one cuts out the cubic $3$-fold which is the union of the lines secant to $C$. The union of the higher dimensional planes secant to $C$ covers the whole ambient space $\bb{P}^4$, which corresponds to the fact that the above matrices don't have minors of size larger than three.

Unlike the case of $\bb{P}^1$ which is completely understood, we do not know in general the equations of the secant varieties of Veronese embeddings of higher dimensional projective spaces. Minors of catalecticant matrices (which are generalized versions of Hankel matrices, see Section \ref{subcatvar} for definitions) provide some equations for these secant varieties, but turn out not to be sufficient in many cases.

Determinantal loci of catalecticant matrices are of particular interest in their own right, but also via their connection to Hilbert functions of Gorenstein Artin algebras, the polynomial Waring problem, or configurations of points in projective space (see \cites{ger-inversesystems,kanev-iarrobino}). In \cite{geramita}, Geramita gives a beautiful exposition of classical results about catalecticant varieties, and proposes several further questions (see also \cite[Chapter~9]{kanev-iarrobino}). We recall the last one, which we shall answer affirmatively. It is divided into two parts:

\begin{question}\label{que:Geramita} Write $Cat(t,d-t;n)$ for the $t$-th generic catalecticant matrix (see Section~\ref{subcatvar}), and $I_3(Cat(t,d-t;n))$ for the ideal generated by its $3\times 3$ minors.
\begin{enumerate}
\item[(a)]\label{Q5a} Is it true that
\[I_3(Cat(2,d-2;n))=I_3(Cat(t,d-t;n))\]
for all $t$ with $2\leq t\leq d-2$?

\item[(b)]\label{Q5b} Is it true that for $n\geq 3$ and $d\geq 4$
\[I_3(Cat(1,d-1;n))\subsetneq I_3(Cat(2,d-2;n))?\]
\end{enumerate}
\end{question}

Our main result answers positively both parts of Question~\ref{que:Geramita}:

\begin{tmain} Let $K$ be a field of characteristic $0$ and let $n,d\geq 2$ be integers. The following statements hold:
\begin{enumerate}
\item For all $t$ with $2\leq t\leq d-2$ one has
\[I_3(Cat(2,d-2;n))=I_3(Cat(t,d-t;n)).\]

\item If $d\geq 4$ then there is a strict inclusion
\[I_3(Cat(1,d-1;n))\subsetneq I_3(Cat(2,d-2;n)).\]
\end{enumerate}
\end{tmain}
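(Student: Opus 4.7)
Since the $3 \times 3$ minors of any catalecticant are cubics, each ideal $I_3(Cat(t, d-t; n))$ is generated in degree $3$ as an ideal of $R := K[z_\alpha]$, where $\alpha$ runs over degree-$d$ monomials in $n+1$ variables. Moreover, $Cat(t, d-t; n)$ is the transpose of $Cat(d-t, t; n)$, so the two share identical $3 \times 3$ minors. Hence comparing ideals reduces to comparing their degree-$3$ parts as $GL(V)$-submodules of $R_3 = \text{Sym}^3(\text{Sym}^d V^*)$, where $V = K^{n+1}$.

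My plan for part (1) is to show $I_3(Cat(t, d-t; n)) = I_3(Cat(t+1, d-t-1; n))$ for every $2 \leq t \leq d-3$, then chain the equalities. The degree-$3$ part of the ideal is the image of the $GL(V)$-equivariant map
\[
\Phi_t : \bigwedge\nolimits^3 \text{Sym}^t V^* \otimes \bigwedge\nolimits^3 \text{Sym}^{d-t} V^* \longrightarrow R_3,
\]
sending $(\alpha_1 \wedge \alpha_2 \wedge \alpha_3) \otimes (\beta_1 \wedge \beta_2 \wedge \beta_3)$ to $\det(z_{\alpha_i \cdot \beta_j})_{i,j}$. I would decompose the source via the Cauchy formula applied to $\bigwedge^3 \text{Sym}^k V^*$, decompose the target via the plethysm $\text{Sym}^3 \text{Sym}^d V^*$, and identify $\operatorname{Im}(\Phi_t)$ as a specific sum of Schur modules $S_\lambda V^*$. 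The objective is to verify that this list of $\lambda$'s stabilizes as soon as both $t$ and $d-t$ are at least $2$.

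A complementary, more hands-on approach is as follows. A $3 \times 3$ minor of $Cat(t+1, d-t-1; n)$ whose row indices $\alpha_1, \alpha_2, \alpha_3$ share a common linear factor $x_k$ is literally a minor of $Cat(t, d-t; n)$, obtained by transferring $x_k$ from the rows to the columns; the symmetric statement on columns uses $d-t-1 \geq 1$. For a general minor, where the $\alpha_i$ share no common factor, I would use Plücker-type relations among minors to reduce modulo the ``factored'' ones. The main obstacle is precisely this last step: writing an arbitrary minor of $Cat(t+1, d-t-1; n)$ as a $K$-linear combination of minors of $Cat(t, d-t; n)$ requires nontrivial combinatorial identities among bideterminants, or equivalently a careful $GL(V)$-representation-theoretic analysis to match the two images $\operatorname{Im}(\Phi_t)$ and $\operatorname{Im}(\Phi_{t+1})$.

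For part (2), I would exhibit an explicit cubic lying in $I_3(Cat(2, d-2; n)) \setminus I_3(Cat(1, d-1; n))$. A natural candidate, when $n \geq 2$ and $d \geq 4$, is the $3 \times 3$ minor of $Cat(2, d-2; n)$ whose row indices are $x_0^2, x_0 x_1, x_1^2$ (which share no common linear factor) with column indices chosen to exploit the slack afforded by $d \geq 4$. I would verify the non-containment by analyzing a specific $\bb{Z}^{n+1}$-multidegree: the span of $3 \times 3$ minors of $Cat(1, d-1; n)$ in that multidegree is manifestly too constrained (it inherits too much symmetry from sharing the same three row indices, namely $x_0, \ldots, x_n$) to include the chosen cubic from $Cat(2, d-2; n)$.
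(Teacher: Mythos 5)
Your setup is fine: each $I_3(Cat(t,d-t;n))$ is generated by cubics, so comparing ideals does reduce to comparing the images of the equivariant maps $\Phi_t$ inside $\mathrm{Sym}^3(\mathrm{Sym}^d V^*)$ (and the transpose remark lets you chain adjacent values of $t$). But the plan stops exactly where the content of the theorem lies, and the success criterion you state is not sufficient. The plethysm $\mathrm{Sym}^3(\mathrm{Sym}^d V^*)$ is \emph{not} multiplicity-free (already for $d=6$ the partition $(12,6)$ occurs with multiplicity $2$, as one sees from the Cayley--Sylvester counts for binary forms), so ``verifying that the list of $\lambda$'s stabilizes'' cannot prove equality of ideals: two submodules of $R_3$ can have identical isotypic support, even identical multiplicities, and still be different subspaces. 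You must show that the images of $\Phi_t$ and $\Phi_{t+1}$ coincide inside each isotypic component, i.e.\ compare the actual highest-weight subspaces spanned by the minors, and neither of your routes supplies a mechanism for this: the abstract decomposition of source and target does not determine the image, and you yourself flag the straightening/Pl\"ucker step of the hands-on approach as the unresolved ``main obstacle.'' Part (2) has the same problem twice over: the inclusion $I_3(Cat(1,d-1;n))\subseteq I_3(Cat(2,d-2;n))$ is itself a statement to be proved (your plan never addresses it), and the non-membership of your candidate minor in the span of the $3\times 3$ minors of $Cat(1,d-1;n)$ in a fixed multidegree is asserted (``manifestly too constrained'') rather than argued. (Also, with the paper's conventions the variables number $n$, not $n+1$, but that is cosmetic.)

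For contrast, the paper resolves precisely the multiplicity issue with two ideas missing from your plan. Since every constituent of $\mathrm{Sym}^3(\mathrm{Sym}^d V)$ has at most three rows, it splits the analysis: for partitions with at most two rows it uses inheritance (Proposition \ref{prop:genericspecial} and Corollary \ref{cor:inheritance}) to reduce the comparison of highest-weight spaces to $n=2$, where all these ideals are classically equal to the ideal of the secant variety of the rational normal curve; this also yields strictness in (2), because $Cat(1,d-1;2)$ has no $3\times 3$ minors while $I_3(Cat(2,d-2;2))\neq 0$ for $d\geq 4$, and because $I_3(1,d-1)$ has no constituents with fewer than three rows (Proposition \ref{prop:1flattenings}). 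For partitions with exactly three rows it passes to the generic $S_N$-model and shows by an explicit circled-tableau computation that the minors of the $t$-th catalecticant span the \emph{entire} isotypic component for every $1\leq t\leq d-1$, so equality there is automatic. To salvage your $GL(V)$-approach you would need an analogue of these steps: an argument that pins down $\mathrm{Im}(\Phi_t)$ inside each multiplicity space (for instance by proving fullness for three-row $\lambda$ and reducing two-row $\lambda$ to the Hankel case), not merely a list of the $\lambda$'s that occur.
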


Geramita also conjectures that any of the catalecticant ideals $I_3(Cat(t,d-t;n))$, $2\leq t\leq d-2$, is the ideal of the secant line variety of the $d$-uple embedding of $\bb{P}^{n-1}$. This follows by combining Theorem \ref{thmmain} with the result of Kanev \cite{kanev} which states that the ideal of the secant line variety of the Veronese variety is generated by the $3\times 3$ minors of the first and second catalecticants:

\begin{tmaincor} Any of the ideals $I_3(Cat(t,d-t;n))$, $2\leq t\leq d-2$, is the ideal of the secant line variety of the $d$-th Veronese embedding of $\bb{P}_K^{n-1}$.
\end{tmaincor}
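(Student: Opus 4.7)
The plan is to derive this corollary as a direct consequence of Theorem \ref{thmmain} combined with Kanev's result \cite{kanev}. Note that the range $2 \le t \le d-2$ is nonempty only when $d \ge 4$, so the statement is vacuous for $d \le 3$; the interesting case $d \ge 4$ is precisely the regime in which part (2) of Theorem \ref{thmmain} applies.

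The argument then has three steps, carried out in order. First, I would invoke Kanev's theorem in the form needed: the ideal of the first secant variety of the $d$-uple embedding $\nu_d(\bb{P}_K^{n-1})$ equals
\[ I_3(Cat(1,d-1;n)) + I_3(Cat(2,d-2;n)), \]
the sum of the ideals of $3 \times 3$ minors of the first and second catalecticants. Second, Theorem \ref{thmmain}(2) supplies the (strict) containment $I_3(Cat(1,d-1;n)) \subsetneq I_3(Cat(2,d-2;n))$, so the displayed sum collapses to the single ideal $I_3(Cat(2,d-2;n))$. Third, Theorem \ref{thmmain}(1) identifies this with $I_3(Cat(t,d-t;n))$ for every $t$ in the range $2 \le t \le d-2$, which is the conclusion of the corollary.

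There is essentially no technical obstacle once Theorem \ref{thmmain} is in hand; the deduction is a matter of chaining together the two inputs. The only care required is a routine bookkeeping check that Kanev's generating set matches, term for term, the sum $I_3(Cat(1,d-1;n)) + I_3(Cat(2,d-2;n))$ under the indexing conventions of Section \ref{subcatvar}. All of the genuine mathematical content — the equality among the middle catalecticant ideals and the strict inclusion at the boundary $t = 1$ that rules out the first catalecticant from contributing new equations — is concentrated entirely in Theorem \ref{thmmain}, so the corollary itself is essentially a formal consequence.
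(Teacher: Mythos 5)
Your proposal is correct and takes essentially the same route as the paper: the paper's proof of the corollary is exactly the combination of Kanev's theorem (the ideal of the secant line variety is generated by the $3\times 3$ minors of the first and second catalecticants) with Theorem \ref{thmmain}. Your spelled-out collapse of the sum $I_3(Cat(1,d-1;n))+I_3(Cat(2,d-2;n))$ to $I_3(Cat(2,d-2;n))$ is the intended deduction, where in fact only the containment from part (2) is needed, not its strictness.
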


As mentioned earlier, when $n=2$ it is well-known \cites{gruson-peskine,eis-detl,conca} that
\begin{equation}\label{equ:ratlnormalcurve}
I_k(Cat(k-1,d-k+1;2))=I_k(Cat(t,d-t;2))
\end{equation}
for all $t$ with $k-1\leq t\leq d-k+1$, and that any of these ideals is the ideal of the $(k-2)$-nd secant variety of the rational normal curve in $\bb{P}^d$. This fact will turn out to be useful in the proof of Theorem \ref{thmmain}.

Theorem \ref{thmmain} yields special cases of two general conjectures. One of them is implicit in Geramita's question Q4 from \cite{geramita}:
\begin{conjecture}\label{conj:ger} For all $k,n\geq 2$, $d\geq 2k-2$ and $t$ with $k-1\leq t\leq d-k+1$, one has
\[I_k(Cat(k-1,d-k+1;n))=I_k(Cat(t,d-t;n)).\]
Moreover, the following inclusions hold:
\[I_k(Cat(1,d-1;n))\subsetneq I_k(Cat(2,d-2;n))\subsetneq\cdots\subsetneq I_k(Cat(k-1,d-k+1;n)).\]
\end{conjecture}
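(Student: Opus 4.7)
The plan is to adapt the strategy behind Theorem \ref{thmmain} to general $k$, using induction on $k$, the rational normal curve case (where \eqref{equ:ratlnormalcurve} resolves all $k$ for $n = 2$) as the base, and $GL_n$-equivariance combined with induction on $n$ to lift to arbitrary $n$.

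First I would establish the chain of inclusions $I_k(Cat(t,d-t;n)) \subset I_k(Cat(t+1,d-t-1;n))$ for $1 \leq t < d/2$. The starting observation is that if the monomials $\beta_1, \ldots, \beta_k$ of degree $d-t$ share a common variable $x_\ell$, then the $k \times k$ minor of $Cat(t,d-t;n)$ with row-labels $\alpha_1, \ldots, \alpha_k$ and column-labels $\beta_1, \ldots, \beta_k$ equals literally the $k \times k$ minor of $Cat(t+1,d-t-1;n)$ with row-labels $x_\ell \alpha_i$ and column-labels $\beta_j / x_\ell$. When the $\beta_j$'s have no common factor, one expands the minor by Laplace cofactor expansion along a row and reduces to smaller minors, inductively using the previously established inclusion for $I_{k-1}$.

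Second, for the equalities $I_k(Cat(k-1,d-k+1;n)) = I_k(Cat(t,d-t;n))$ with $k-1 \leq t \leq d-k+1$, the transpose relation $Cat(s,d-s;n)^{\top} = Cat(d-s,s;n)$ reduces to the range $t \leq d/2$, and the chain of inclusions leaves only the reverse containment $I_k(Cat(k-1,d-k+1;n)) \subset I_k(Cat(t,d-t;n))$. I would attack this by $GL_n$-equivariant analysis: decompose both ideals into Schur functor isotypic components $\bb{S}_\lambda K^n$ and compare multiplicities. Partitions $\lambda$ with at most two parts are controlled by restriction along generic linear embeddings $\bb{P}^1 \hookrightarrow \bb{P}^{n-1}$, since such embeddings send $Cat(t,d-t;n)$ to $Cat(t,d-t;2)$ where \eqref{equ:ratlnormalcurve} gives equality; longer partitions would be handled by induction on $n$ via generic hyperplane sections, which should reduce each such case to the previously settled ones.

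The principal obstacle is controlling the Schur functor decomposition of $I_k(Cat(t,d-t;n))$ for $k \geq 4$. For $k = 2$ and $k = 3$ the decomposition is accessible: the former is the Veronese ideal and the latter is unlocked by Kanev's description of the secant line variety. For higher $k$, the relevant ideals encode higher secant varieties of Veronese embeddings, whose generators are not understood in general. An induction on $k$ using the conjecture for smaller values as input is a natural framework, but care is needed to avoid circularity in the multiplicity comparison. Even granting Conjecture \ref{conj:ger}, identifying the common ideal with that of a secant variety (as achieved for $k=3$ via Kanev) would remain open pending progress on the equations of higher secant varieties of Veronese varieties.
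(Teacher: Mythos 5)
The statement you are treating is Conjecture \ref{conj:ger}, which the paper itself does not prove: it is stated as a conjecture, and the paper establishes only the cases $k=2$ (Theorem \ref{thmpuc}) and $k=3$ (Theorem \ref{thmmain}), citing the thesis for $k=4$. So there is no proof in the paper to match your outline against, and your outline does not close the general case either; beyond the gap you concede in your last paragraph, two of your intermediate steps fail as stated. First, the inclusion step: when the column labels $\beta_1,\dots,\beta_k$ have no common variable, Laplace expansion of the $k\times k$ minor of $Cat(t,d-t;n)$ expresses it as a combination of $(k-1)\times(k-1)$ minors of the \emph{same} catalecticant, and invoking the inductively known inclusion for $I_{k-1}$ only places your element in $I_{k-1}(Cat(t+1,d-t-1;n))$, which is a strictly larger ideal than $I_k(Cat(t+1,d-t-1;n))$; nothing reassembles the cofactors into $k\times k$ minors of the shifted catalecticant. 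This is exactly why the paper's $k=2$ argument needs the special determinantal identity \eqref{rel2x2} together with a careful splitting of the multisets, rather than cofactor expansion, and why even the inclusion chain in Conjecture \ref{conj:ger} is only known up to radical in general.

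Second, the equality step misuses inheritance. By Proposition \ref{prop:genericspecial} and Corollary \ref{cor:inheritance}, the multiplicity of $S_\ll V$ with $\ll$ having $p$ parts in the span of $k\times k$ minors is governed by the case $n=p$ (equivalently by the generic $S_N$-module $I_k^k(a,b)\subset W_d^k$); restriction to $\bb{P}^1$ therefore settles only the partitions with at most two rows, while for partitions with three or more rows a generic hyperplane section or induction on $n$ yields nothing new --- that is precisely the hard content, not a reduction to ``previously settled'' cases. In the paper's $k=3$ proof this content is an explicit circled-tableau computation (as in Example \ref{ex:circledtableaux}) showing that for every $\ll$ with three rows the $\ll$-part of $I_3(t,d-t)$ is the \emph{entire} $\ll$-part of $W_d^3$ for all $1\le t\le d-1$; this saturation phenomenon is special to small $k$ and is exactly what must be replaced, with genuinely new multiplicity computations, for $k\ge 4$. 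As it stands, your proposal is a plausible framework (reduce to the generic $S_N$-setting, use inheritance for short partitions, compute highest weight spaces for long ones) but not a proof, and the statement remains a conjecture for $k\ge 5$.
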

The other one is a conjecture by Sidman and Smith \cite{SS}:
\begin{conjecture}\label{conjss} Let $k$ be a positive integer. If $X\subset\bb{P}^n$ is embedded by the complete linear series of a sufficiently ample line bundle, then the homogeneous ideal of the $(k-2)$-nd secant variety of $X$ is generated by the $k\times k$-minors of a $1$-generic matrix of linear forms.
\end{conjecture}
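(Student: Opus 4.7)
The plan is to produce an explicit $1$-generic matrix of linear forms on $\bb{P}(H^0(X,L)^*)$ whose $k\times k$ minors generate the ideal of the $(k-2)$-nd secant variety $\Sigma_{k-2}(X)$. The natural candidate generalizes the catalecticant: write the sufficiently ample $L$ as $A\o B$ with $A,B$ both very ample and satisfying appropriate cohomological vanishing (at a minimum $H^i(A)=H^i(B)=0$ for $i>0$ and surjectivity of the multiplication on global sections), and take $M$ to be the matrix representing $H^0(A)\o H^0(B)\to H^0(L)$, with rows indexed by a basis of $H^0(A)$, columns by a basis of $H^0(B)$, and entries linear forms on $\bb{P}(H^0(L)^*)$. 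When $X=\bb{P}^{n-1}$ and $L=\mc{O}(d)$ this recovers $Cat(a,b;n)$ for $a+b=d$, so the present paper's Theorem \ref{thmmain} is the instance $k=3$, $X=\bb{P}^{n-1}$.

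First I would verify $1$-genericity of $M$, which should reduce to the statement that $A$ and $B$ separate points on $X$, so that no generalized row or column of $M$ vanishes identically. Next I would establish set-theoretic containment $\Sigma_{k-2}(X)\subset V(I_k(M))$: a general point of $\Sigma_{k-2}(X)$ is a sum of $k-1$ point evaluations, and the evaluated matrix factors through a $(k-1)$-dimensional space, so all its $k\times k$ minors vanish. The reverse set-theoretic inclusion should follow from Eisenbud's rank-stratification theory for $1$-generic matrices, together with the observation that the low-rank loci of such multiplication-type matrices are identifiable with spans of at most $k-1$ point evaluations when $L$ is ample enough to ensure injectivity of multiplication on length-$(k-1)$ subschemes.

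The hard step is upgrading equality of supports to equality of ideals. I would pursue two complementary approaches. The first is a cohomological/regularity attack, in the spirit of recent work on asymptotic syzygies: prove that for $L$ sufficiently ample the Castelnuovo--Mumford regularity of $\Sigma_{k-2}(X)$ is bounded by $k$, so the problem reduces to the degree-$k$ identity $I(\Sigma_{k-2}(X))_k=I_k(M)_k$, which is amenable to direct Koszul or representation-theoretic computation. The second is a degeneration approach: specialize $X$ inside its linear system to a simpler (e.g.\ toric or reducible) variety where both the secant ideal and the minor ideal degenerate to comparable combinatorial objects, and then use flatness of the appropriate Hilbert schemes to transport the equality back to $X$.

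The principal obstacle, and the reason this conjecture is still open in general, is precisely the ideal-theoretic step. Every known case exploits either strong symmetry (representation-theoretic input, as in Kanev's treatment of secant lines of Veroneses and the present paper's argument for $Cat(t,d-t;n)$) or very strong positivity of $L$ in an effectively controlled range. Extracting a uniform vanishing/regularity statement that forces the minors to generate \emph{all} equations of $\Sigma_{k-2}(X)$, and not merely to cut out the correct support, is the central difficulty that any general proof must confront.
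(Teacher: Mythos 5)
You have not proved the statement, and you could not have: in this paper the statement is Conjecture \ref{conjss} (due to Sidman and Smith \cite{SS}), not a theorem. The paper offers no proof of it; it only establishes the very special case encoded in Theorem \ref{thmmain} and Corollary \ref{thmmaincor} ($k=3$, $X=\bb{P}^{n-1}$, with the catalecticant as the $1$-generic matrix), and it explicitly records that the conjecture as stated is \emph{false} for singular $X$ by \cite{bgl}, with no known smooth counterexamples. So any argument for the statement exactly as written is doomed without an added smoothness (or similar) hypothesis, and even for smooth $X$ the problem is open. Your own text concedes this: you describe the passage from set-theoretic equality to equality of ideals as ``the central difficulty that any general proof must confront'' and then do not confront it. A research plan that names the missing step is not a proof of that step.

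Beyond the main gap, several intermediate claims are also unsubstantiated. The reverse set-theoretic inclusion $V(I_k(M))\subset\Sigma_{k-2}(X)$ does not simply ``follow from Eisenbud's rank-stratification theory''; identifying the rank-$\le k-1$ locus of a multiplication matrix $H^0(A)\o H^0(B)\to H^0(L)$ with the secant variety requires genuinely new input, and \cite{bucz-bucz} shows that for catalecticant-type matrices this identification already fails for higher secant varieties of very positive Veronese re-embeddings of projective space, so the choice of $A\o B$ cannot be made carelessly. Likewise, neither the asserted regularity bound ($\mathrm{reg}\le k$ for $\Sigma_{k-2}(X)$ under sufficient ampleness) nor the proposed degeneration-plus-flatness transfer is proved or reduced to known results; each is itself an open problem of comparable difficulty. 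If you want a statement you can actually prove with the paper's toolkit, it is the case treated here: reduce to the generic $S_N$-equivariant setting (Proposition \ref{prop:genericspecial}), show the equality of the spaces of $3\times 3$ minors of the middle catalecticants by the circled-tableau computation, and invoke \cite{kanev} to conclude for the secant line variety of the Veronese.
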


Conjecture \ref{conjss} has been proven to be false for singular $X$ \cite{bgl}, but there are no known smooth counterexamples. The case $X=\bb{P}^r$ is a sufficiently interesting special case. In \cite{bucz-bucz} it is shown that minors of catalecticants are not enough to cut out the secant varieties even for very positive embeddings of projective space. Both conjectures~\ref{conj:ger} and~\ref{conjss} are known to be true for $k=2$, by results of Pucci \cite{pucci} and Sidman and Smith \cite{SS}. The argument in \cite{pucci} is rather long, so we will give a simplified proof in Section~\ref{sec2x2}. The case $k=3$ is treated in Section~\ref{secmain}. We prove the case $k=4$ of Conjecture~\ref{conj:ger} in~\cite[Section~6.3]{raicu}, using similar techniques.

The main tool that we will be using in our proofs is the reduction to the ``generic'' situation, as explained in \cite{raiGSS}. Showing the equality of the spaces of minors for the various catalecticants reduces to the more combinatorial problem of showing that certain representations of a symmetric group coincide.

The structure of the paper is as follows. In Section \ref{secpre} we establish some notation from representation theory, and recall some basic facts about catalecticant varieties and secant varieties of Veronese varieties. In particular, we describe the relationship between catalecticant matrices and Gorenstein Artin algebras, which motivates Conjecture \ref{conj:ger}. In Section \ref{secgeneric} we set up the ``generic case'': we introduce certain representations of symmetric groups which correspond by specialization to ideals of minors of catalecticant matrices. We then illustrate our techniques in Section \ref{sec2x2} by giving a simple proof of Pucci's result - Conjecture \ref{conj:ger} in the case $k=2$. In Section \ref{secmain} we give an affirmative answer to Geramita's Question~\ref{que:Geramita} (Theorem \ref{thmmain}).

\section{Preliminaries}\label{secpre}

\subsection{Representation theory}

For an introduction to the representation theory of the symmetric and general linear groups see \cite{ful-har} or \cite{macdonald}. Given a finite dimensional vector space $V$ over a field $K$ of characteristic zero, we denote by $GL(V)$ the group of invertible linear transformations of $V$. For a positive integer $N$, we write $S_N$ for the group of permutations of the set $\{1,\cdots,N\}$.

A \defi{partition} $\ll$ of an integer $N$ is a nonincreasing sequence $\ll_1\geq\ll_2\geq\cdots$ with $N=\sum\ll_i$. We write $\ll=(\ll_1,\ll_2,\cdots)$. Alternatively, if $\mu$ is a partition having $i_j$ parts equal to $\mu_j$ for all $j$, then we write $\mu=(\mu_1^{i_1}\cdot\mu_2^{i_2}\cdots)$. To a partition $\ll$ we associate a \defi{Young diagram} which consists of left-justified rows of boxes, with $\ll_i$ boxes in the $i$-th row. We shall identify a partition $\ll$ with its Young diagram. A \defi{tableau} is a filling of the Young diagram. The \defi{canonical tableau} is the one that numbers the boxes consecutively from left to right, top to bottom. For $\ll=(3,3,1)=(1^1\cdot 3^2)$, the canonical tableau is
\[\Yvcentermath1\young(123,456,7).\]

The irreducible representations  of $GL(V)$ and $S_N$ that will concern us are classified by partitions. For $GL(V)$, they are the \defi{Schur functors} $S_{\ll}V$, with the convention that $S_{(d)}V=\rm{Sym}^d V$ is the $d$-th symmetric power of $V$, while $S_{(1^k)}V=\Lambda^k V$ is the $k$-th exterior power of $V$. For $G=S_N$, we write $[\ll]$ for the irreducible representation corresponding to the partition $\ll$. $[(N)]$ denotes the trivial representation, while $[(1^N)]$ denotes the sign representation.

The $GL(V)$-- (resp. $S_N$--) representations $W$ that we consider decompose as a direct sum of $S_{\ll}V$'s (resp. $[\ll]$'s). We write
\[W=\bigoplus_{\ll}W_{\ll},\]
where $W_{\ll}\simeq (S_{\ll}V)^{m_{\ll}}$ (resp. $W_{\ll}\simeq [\ll]^{m_{\ll}}$) is the \defi{$\ll$-isotypic component} of $W$. $m_{\ll}=m_{\ll}(W)$ is called the \defi{multiplicity} of $S_{\ll}V$ (resp. $[\ll]$) in $W$.

Up to making some choices, each $W_{\ll}$ contains a distinguished subspace $\rm{hwt}_{\ll}(W)$, called the \defi{$\ll$-highest weight space} of $W$. For $GL(V)$, this is the space of vectors of weight $\ll$ invariant under (some choice of) the Borel subgroup, while for $S_N$ it is the vector space $c_{\ll}\cdot W$, where $c_{\ll}$ is a Young symmetrizer. The defining property that will be important for us is that $\rm{hwt}_{\ll}(W)$ is a vector space of dimension $m_{\ll}(W)$ which generates $W_{\ll}$ as a $GL(V)$-- (resp. $S_N$--) representation.

\subsection{Catalecticant varieties}\label{subcatvar}

Given a vector space $V$ of dimension $n$ over $K$, with basis $\mc{B}=\{x_1,\cdots,x_n\}$, we consider its dual space $V^*$ with dual basis $\mc{E}=\{e_1,\cdots,e_n\}$. For every positive integer $d$ we get a basis of $S_{(d)}V^*$ consisting of \defi{divided power monomials} $e^{(\a)}$ of degree $d$ in the $e_i$'s, as follows. If $\a\subset\{1,\cdots,n\}$ is a multiset of size $|\a|=d$, then we write $e^{\a}$ for the monomial 
\[\prod_{i\in\a}e_i.\]
We often identify $\a$ with the multiindex $(\a_1,\cdots,\a_n)$, where $\a_i$ represents the number of occurrences of $i$ in the multiset $\a$. We write $e^{(\a)}$ for $e^{\a}/\a!$, where $\a!=\a_1!\cdots\a_n!$. For $a,b>0$ with $a+b=d$ we get a \defi{divided power multiplication} map $S_{(a)}V^*\o S_{(b)}V^*\to S_{(d)}V^*$, sending $e^{(\a)}\o e^{(\b)}$ to $e^{(\a\cup\b)}$. We can represent this via a multiplication table whose rows and columns are indexed by multisets of sizes $a$ and $b$ respectively, and whose entry in position $(\a,\b)$ is $e^{(\a\cup\b)}$. The \defi{generic catalecticant matrix} $Cat(a,b;n)$ is defined to be the matrix obtained from this multiplication table by replacing each $e^{(\a\cup\b)}$ with the variable $z_{\a\cup\b}$, where $(z_{\c})_{|\c|=d}\subset S_{(d)}V$ is the dual basis to $(e^{(\c)})_{|\c|=d}\subset S_{(d)}V^*$.

One can also think of $z_{\c}$'s as the coefficients of the generic form of degree $d$ in the $e_i$'s, $F=\sum z_{\c}e^{(\c)}$. Specializing the $z_{\c}$'s we get an actual form $f\in S_{(d)}V^*$, and we denote the corresponding \defi{catalecticant matrix} by $Cat_f(a,b;n)$. Any such form $f$ is the \defi{dual socle generator} of some Gorenstein Artin algebra $A$ \cite[Thm.\ 21.6]{eis-CA} with socle degree $d$ and Hilbert function
\[h_i(A)=\rm{rank}(Cat_f(i,d-i;n)).\]
Macaulay's theorem on the growth of the Hilbert function of an Artin algebra \cite[Thm.\ 4.2.10]{bru-her} implies that if $h_i<k$ for some $i\geq k-1$, then the function becomes nonincreasing from that point on. In particular, since $A$ is Gorenstein, $h$ is symmetric, so if $h_i<k$ for some $k-1\leq i\leq d-k+1$, then we have
\[h_1\leq h_2\leq\cdots\leq h_{k-1}=h_k=\cdots=h_{d-k+1}\geq h_{d-k+2}\geq\cdots\geq h_{d}.\]

If we denote by $I_k(i)=I_k(Cat(i,d-i;n))$ the ideal of $k\times k$ minors of the $i$-th generic catalecticant, then the remarks above show that whenever $k-1\leq d-k+1$ we have the following up-to-radical relations:
\[I_k(1)\subset\cdots\subset I_k(k-1)=\cdots=I_k(d-k+1)\supset\cdots\supset I_k(d-1).\]
Conjecture \ref{conj:ger} states that these relations hold exactly. We prove the conjecture in the case $k=3$ in Section \ref{secmain}.

\subsection{Secant Varieties of Veronese Varieties}\label{seceqnsec}

Given a vector space $U$ over a field $K$ of characteristic zero, we write $\bb{P}U$ for the projective space of lines in $U$. For $0\neq u\in U$, we denote by $[u]$ the corresponding line. For $d$ a positive integer, we consider the Veronese embedding
\[\rm{Ver}_d:\bb{P}(V^*)\to\bb{P}(S_{(d)}V^*),\ \textrm{ given by }\ [e]\mapsto[e^{(d)}].\]
Its \defi{$k$-th secant variety} is the Zariski closure of the union of the linear subspaces spanned by collections of $k+1$ points in the image of $\rm{Ver}_d$. We denote it by $\s_{k+1}(\rm{Ver}_d(\bb{P}V^*))$. Note that for $k=0$ this is just the image of $\rm{Ver}_d$.

The homogeneous coordinate ring of $\bb{P}(S_{(d)}V^*)$ is $S=\rm{Sym}(S_{(d)}V)$, the symmetric algebra over $S_{(d)}V$. Using the basis $(z_{\a})\subset S_{(d)}V$ dual to $(e^{(\a)})\subset S_{(d)}V^*$ we can write $S$ as the polynomial ring $K[z_{\a}]$. An important open problem is to find the ideal $I\subset S$ of polynomials vanishing on $\s_{k}(\rm{Ver}_d(\bb{P}V^*))$ (see \cite{LO} for the current state of the art). The following result is well-known (see \cite{kanev-iarrobino} or \cite{landsberg}).

\begin{lemma}\label{lem:eqnsminors} For every $1\leq i\leq d$ and $k\geq 1$, the ideal $I_{k+1}(Cat(i,d-i;n))$ is contained in the ideal of $\s_{k}(\rm{Ver}_d(\bb{P}V^*))$.
\end{lemma}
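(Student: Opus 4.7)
The plan is to reduce the containment to the geometric statement that catalecticants of forms lying in the affine cone over $\s_k(\rm{Ver}_d(\bb{P}V^*))$ have rank at most $k$. Since the secant ideal is the ideal of a Zariski-closed subvariety and the minors are polynomials, it suffices to verify the vanishing of every $(k+1)\times(k+1)$ minor of $Cat(i,d-i;n)$ on a dense subset of $\s_k$. A convenient choice is the collection of forms of the shape $f=e_1^{(d)}+\cdots+e_k^{(d)}$ with $e_1,\ldots,e_k\in V^*$, whose projectivizations form an open subset of $\s_k(\rm{Ver}_d(\bb{P}V^*))$.

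For such an $f$, the objective is to show that $Cat_f(i,d-i;n)$ has rank at most $k$. The assignment $f\mapsto Cat_f(i,d-i;n)$ is visibly linear in $f$, so
\[Cat_f(i,d-i;n)=\sum_{j=1}^{k}Cat_{e_j^{(d)}}(i,d-i;n),\]
and it is enough to prove that each summand has rank at most one. A short divided-power computation gives, for $e=\sum c_i e_i\in V^*$, the expansion $e^{(d)}=\sum_{|\a|=d}c^{\a}\,e^{(\a)}$, so the $(\b,\c)$-entry of $Cat_{e^{(d)}}(i,d-i;n)$ equals $z_{\b\cup\c}(e^{(d)})=c^{\b\cup\c}=c^{\b}\cdot c^{\c}$. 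This matrix is therefore the outer product of the column vector $(c^{\b})_{|\b|=i}$ and the row vector $(c^{\c})_{|\c|=d-i}$, and so has rank at most one.

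Combining the two steps, $Cat_f(i,d-i;n)$ has rank at most $k$ on the dense subset of $\s_k(\rm{Ver}_d(\bb{P}V^*))$ described above, so all its $(k+1)\times(k+1)$ minors vanish there; passing to the Zariski closure yields the claimed inclusion of ideals. There is no genuine obstacle in this argument; the only point requiring minor attention is the factorial bookkeeping inherent in the divided-power convention, which is exactly what ensures that $Cat_{e^{(d)}}(i,d-i;n)$ factors cleanly as an outer product in the specified basis, with no stray binomial coefficients getting in the way.
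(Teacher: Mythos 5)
Your proof is correct, and it is the standard argument: the paper itself does not prove this lemma but simply cites Iarrobino--Kanev and Landsberg, where essentially your computation (linearity of $f\mapsto Cat_f(i,d-i;n)$ plus the rank-one factorization $z_{\b\cup\c}(e^{(d)})=c^{\b}c^{\c}$ for a pure power) appears. Two small wording points, neither a real gap: the sums $e_1^{(d)}+\cdots+e_k^{(d)}$ form a dense constructible subset of the cone over $\s_k(\rm{Ver}_d(\bb{P}V^*))$ rather than an open one, but density is all your closure argument needs; and a general point of a span of $k$ Veronese points is $\sum_j\ll_j e_j^{(d)}$ with scalars $\ll_j$, which your linearity step already covers (each $\ll_j Cat_{e_j^{(d)}}$ still has rank at most one), so there is no need to absorb the scalars into the $e_j$ or to pass to the algebraic closure.
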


\section{The ``Generic'' Case}\label{secgeneric}

The material in this section is based on~\cite[Section~3B]{raiGSS}. The basic idea is to use Schur-Weyl duality in order to translate questions about representations of general linear groups, such as the vector spaces spanned by the minors of catalecticants, into questions about representations of symmetric groups and tableau combinatorics.

We write $N=N(r,d)=r\cdot d$, and consider the vector space $W_d^r$ with basis consisting of monomials in commuting variables $z_{\a_i}$
\[
\begin{split}
m=z_{\a_1}\cdots z_{\a_r},\ \rm{ where }\ \a_1\sqcup\cdots\sqcup\a_r &\ \textrm{ is a partition of the set }\ \{1,\cdots,N\},\\
\ \rm{ with }\ |\a_i|=d &\ \textrm{ for all }\ i=1,\cdots,r.
\end{split}
\]
An element $\s$ of the symmetric group $S_N$ acts on a monomial $m$ as follows:
\[\s(m)=\s(z_{\a_1}\cdots z_{\a_r})=z_{\s(\a_1)}\cdots z_{\s(\a_r)},\]
where for a subset $\a\subset\{1,\cdots,N\}$, $\s(\a)=\{\s(x):x\in \a\}$.

\begin{definition}[Generic flattenings]\label{def:genericflattenings}
For $k\leq r$, $a,b$ with $a+b=d$, and disjoint subsets $\a_1,\cdots,\a_k,\b_1,\cdots,\b_k\subset\{1,\cdots,N\}$ with $|\a_i|=a$, $|\b_i|=b$ for all $i=1,\cdots k$, we let
\[[\a_1,\cdots,\a_k|\b_1,\cdots,\b_k]=\rm{det}(z_{\a_i\cup\b_j})_{1\leq i,j\leq k}.\]
Fixing $k,d$ and $a,b$ with $a+b=d$, we define the \defi{ideal of generic $k\times k$ minors of the $a$-th catalecticant} to be the collection, indexed by the degree $r$, of subrepresentations $I_k^r(a,b)\subset W_d^r$ spanned by the expressions
\[[\a_1,\cdots,\a_k|\b_1,\cdots,\b_k]\cdot z_{\c_1}\cdots z_{\c_{r-k}},\]
where $\a_1\sqcup\cdots\sqcup\a_k\sqcup\b_1\sqcup\cdots\sqcup\b_k\sqcup\c_1\sqcup\cdots\sqcup\c_{r-k}$ form a partition of the set $\{1,\cdots,N\}$, with $|\a_i|=a$, $|\b_i|=b$, $|\c_i|=d$. 
\end{definition}

\begin{example}\label{ex:genericflattenings} Take $d=k=r=3$, $a=2$ and $b=1$. A typical element of $I_3^3(a,b)$ looks like
\[D=[\{1,2\},\{4,6\},\{5,8\}|\{3\},\{7\},\{9\}]=\rm{det}\left[ \begin{array}{ccc}
z_{\{1,2,3\}} & z_{\{1,2,7\}} & z_{\{1,2,9\}} \\
z_{\{4,6,3\}} & z_{\{4,6,7\}} & z_{\{4,6,9\}} \\
z_{\{5,8,3\}} & z_{\{5,8,7\}} & z_{\{5,8,9\}} \\
\end{array} \right].\]
\end{example}

It would be desirable to understand the decomposition into irreducible $S_N$--representations of all $I_k^r(a,b)$. This is of course a hopeless goal at this point, since not even the case $k=1$, i.e. the symmetric plethysm problem of decomposing $W_d^r$, is understood in general. Nevertheless, we will be able to achieve our goal in the case of the representations $I_2^2(a,b)$ and $I_3^3(a,b)$. This will allow us to prove conjectures \ref{conj:ger} and \ref{conjss} in the special case $k=3$, $X=\bb{P}^n$, and to reprove Pucci's result (Theorem \ref{thmpuc}). We start with a general observation:

\begin{proposition}\label{prop:1flattenings} For any $k,r,d$, the subrepresentation $I_k^r(1,d-1)\subset W_d^r$ is the sum of the irreducible subrepresentations of $W_d^r$ corresponding to partitions $\ll$ with at least $k$ parts.
\end{proposition}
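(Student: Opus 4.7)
My plan is to identify $I_k^r(1,d-1)$ with the $S_N$-subrepresentation obtained by antisymmetrizing elements of $W_d^r$ over $k$-element subsets of $\{1,\ldots,N\}$. For each such subset $S$, let
\[
A_S = \sum_{\sigma \in S_S} \mathrm{sgn}(\sigma)\,\sigma \in K[S_N]
\]
be the antisymmetrizer over $S$. The first step is to prove the identity
\[
I_k^r(1,d-1) = \sum_{|S|=k} A_S \cdot W_d^r,
\]
which is manifestly $S_N$-stable because $\sigma A_S \sigma^{-1} = A_{\sigma(S)}$. For the $\supset$ inclusion, observe that on a monomial $m = z_{\alpha_1}\cdots z_{\alpha_r}$ the antisymmetrizer $A_S$ either vanishes (if two elements of $S$ lie in the same block $\alpha_j$, since the transposition swapping them fixes $m$), or else $S$ meets each of $k$ distinct blocks in exactly one element, say $\alpha_{j_i} = \{x_i\} \cup \beta_i$ with $S = \{x_1,\ldots,x_k\}$, in which case a direct computation gives
\[
A_S \cdot m = [\{x_1\},\ldots,\{x_k\}\,|\,\beta_1,\ldots,\beta_k]\cdot \prod_{j \notin \{j_1,\ldots,j_k\}} z_{\alpha_j} \in I_k^r(1,d-1).
\]
The reverse inclusion just runs the same computation backwards: every generator of $I_k^r(1,d-1)$ arises as $A_{\{x_1,\ldots,x_k\}}\cdot m$ for a suitable monomial $m$.

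The second step determines which irreducibles appear in $\sum_{|S|=k} A_S \cdot W_d^r$. Decomposing an isotypic component as $W_{d,\lambda}^r \cong [\lambda]\otimes M_\lambda$ with $S_N$ acting only on the first factor, $A_S$ acts as $(A_S|_{[\lambda]})\otimes \mathrm{id}$, so the question reduces to whether $A_S$ is nonzero on $[\lambda]$, i.e.\ whether the restriction $[\lambda]|_{S_S}$ contains the sign representation. Applying the Littlewood--Richardson rule to the branching $S_N \downarrow S_S \times S_{S^c}$, the multiplicity of $\mathrm{sgn}_{S_S}\boxtimes [\nu]$ in $[\lambda]$ equals the number of fillings realizing $\lambda/\nu$ as a vertical $k$-strip, and such a $\nu$ exists if and only if $\ell(\lambda) \geq k$.

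Combining the two steps finishes the proof: if $\ell(\lambda) < k$ then $A_S$ annihilates $[\lambda]$ for every $S$ and the $\lambda$-isotypic component of the sum is zero, while if $\ell(\lambda) \geq k$ the nonzero subspace $A_S \cdot [\lambda]$ generates the irreducible $[\lambda]$ under the $S_N$-action, so tensoring with $M_\lambda$ and taking the $S_N$-orbit recovers all of $W_{d,\lambda}^r$. I expect the main conceptual move to be the observation that the case $a=1$ of the catalecticant is special: the ``row indices'' of the minor are singletons, so expanding the determinant turns the generators of $I_k^r(1,d-1)$ directly into antisymmetrizations by the symmetric group $S_S$. The remaining work is purely a branching-rule calculation; the only delicate bookkeeping is matching the signs in the expansion of $A_S \cdot m$ with the signs in the determinant expansion of the catalecticant minor.
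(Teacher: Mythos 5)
Your proof is correct, but it is not the route the paper takes: the paper does not prove Proposition \ref{prop:1flattenings} internally at all, it simply invokes it as a special case of Proposition 3.20 of \cite{raiGSS}, where the statement is established in the general Segre--Veronese framework of generic flattenings (essentially the generic-equations description of subspace varieties, cf.\ the remark following the proposition pointing to \cite{weyman} and \cite{landsberg}). Your argument is a self-contained symmetric-group proof: (i) because the row sets of a minor of $Cat(1,d-1)$ are singletons, expanding the determinant identifies each generator of $I_k^r(1,d-1)$ with $A_S\cdot m$, where $S$ is the set of row indices, and conversely $A_S\cdot m$ is either zero (two elements of $S$ in one block of $m$) or exactly such a generator, which gives $I_k^r(1,d-1)=\sum_{|S|=k}A_S\cdot W_d^r$; (ii) on an isotypic component $[\lambda]\otimes\mathrm{Hom}_{S_N}([\lambda],W_d^r)$ the antisymmetrizer acts through the factor $[\lambda]$, and it is nonzero there precisely when the sign character of $S_k$ occurs in $[\lambda]|_{S_k}$, which by the Pieri/branching rule happens exactly when $\ell(\lambda)\geq k$; (iii) irreducibility of $[\lambda]$ then pulls the entire isotypic component into the $S_N$-stable span whenever $\ell(\lambda)\geq k$. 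All three steps check out; characteristic zero is used, as in the paper, to split $W_d^r$ into isotypic pieces and to regard $A_S/k!$ as the projector onto sign-isotypic vectors for $S_S$, and the sign bookkeeping you flag is immaterial since only spanning sets are being compared. What your approach buys is independence from the external machinery of \cite{raiGSS} and a transparent explanation of why the case $a=1$ is special (the minor literally becomes an antisymmetrization); what the paper's citation buys is coherence with the polarization/specialization framework (Proposition \ref{prop:genericspecial}, Lemma \ref{lem:relstableaux}) that it imports wholesale from \cite{raiGSS} and uses throughout.
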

\begin{proof} This is a special case of Proposition 3.20 in \cite{raiGSS}.
\end{proof}

\begin{remark} Proposition \ref{prop:1flattenings} is the analogue in the setting of $S_N$-representations of Corollary 7.2.3 in \cite{weyman} or Theorem 5.2.3.6 in \cite{landsberg}. The representations $I_k^r(1,d-1)$ give the ``generic equations'' for the subspace varieties.
\end{remark}

Given a partition $\ll$ of $N$, we index the boxes of its Young diagram in the usual way: the $i$-th box is the one whose entry in the canonical tableau is equal to $i$. Given a partition $\ll$ and monomial $m=z_{\a_1}\cdots z_{\a_r}$, we identify the element $c_{\ll}\cdot m\in\rm{hwt}_{\ll}(W_d^r)$ with a tableau of shape $\ll$, having $d$ entries equal to $i$ in the positions indexed by the elements of the set $\a_i$. For example, if $\ll=(6,2)$, $r=d=3$, $m=z_{1,3,8}\cdot z_{2,4,7}\cdot z_{5,6,9}$, we write
\[\Yvcentermath1 c_{\ll}\cdot m=\young(121233,213).\]
Two tableaux differing by a permutation of the numbers $\{1,\cdots,r\}$ correspond to the same monomial, so we identify them:
\[\Yvcentermath1 c_{\ll}\cdot m=c_{\ll}\cdot z_{2,4,7}\cdot z_{5,6,9}\cdot z_{1,3,8}=\young(313122,132).\]

\begin{lemma}\label{lem:relstableaux} With the above conventions, we have
\begin{enumerate}
\item $T = c_{\ll}\cdot m = 0$ if $T$ has repeated entries in some column.
\item $T_1 = c_{\ll}\cdot m_1$ and $T_2 = c_{\ll}\cdot m_2$ are equal up to sign ($T_1=\pm T_2$) if $T_1,T_2$ differ by permutations within columns or by permutations of columns of the same size.
\end{enumerate}
\end{lemma}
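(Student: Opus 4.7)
The plan is to unpack the Young symmetrizer as $c_\lambda = a_\lambda b_\lambda$, where $a_\lambda = \sum_{p \in P_\lambda} p$ sums over the row stabilizer and $b_\lambda = \sum_{q \in Q_\lambda} \mathrm{sgn}(q)\, q$ is the signed sum over the column stabilizer (both taken with respect to the canonical tableau of shape $\lambda$), and then to exploit the standard coset manipulations that underlie the verification that $c_\lambda^2 \in K \cdot c_\lambda$.

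For part (1), suppose two boxes $p_1, p_2$ in a single column both carry the entry $i$, so that $\{p_1, p_2\} \subset \alpha_i$. The transposition $\tau = (p_1\ p_2)$ lies in $Q_\lambda$ and fixes the monomial $m = z_{\alpha_1}\cdots z_{\alpha_r}$, because swapping two elements inside the single set $\alpha_i$ leaves $z_{\alpha_i}$ unchanged. Partitioning $Q_\lambda$ into the cosets of the subgroup $\langle \tau \rangle$ produces a factorization $b_\lambda = R \cdot (e - \tau)$ in the group algebra, and therefore $b_\lambda m = R(m - \tau m) = 0$, whence $c_\lambda m = 0$.

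For part (2), I would treat the two mechanisms separately. If $T_2$ is obtained from $T_1$ by a column-preserving permutation of the boxes, the monomials are related by $m_2 = q\,m_1$ for some $q \in Q_\lambda$, and the identity $b_\lambda q = \mathrm{sgn}(q)\, b_\lambda$ immediately yields $T_2 = \mathrm{sgn}(q)\, T_1$. The subtler case is swapping two columns of the same length $\ell$: as a permutation of boxes this is a product of $\ell$ transpositions, each of which lives inside a single row, so it is an element $p$ of the row stabilizer $P_\lambda$; but the same permutation also maps the family of columns to itself, so it normalizes $Q_\lambda$ and therefore commutes with $b_\lambda$. Combining $a_\lambda p = a_\lambda$ with $p\, b_\lambda = b_\lambda\, p$ gives $c_\lambda p = c_\lambda$, and hence $T_2 = T_1$.

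The main obstacle I anticipate is not any of the calculations, all of which are routine, but rather keeping straight the dual nature of a swap of equal-sized columns: one must use simultaneously that it belongs to the row stabilizer and that its conjugation action preserves the column stabilizer. Once this point is isolated, the proof falls out of standard Young symmetrizer identities, and the compositions of the two kinds of moves change $T$ only by a sign.
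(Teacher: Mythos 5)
Your argument is correct. Note that the paper gives no in-text proof of this lemma at all --- it simply cites Lemma 3.16 of \cite{raiGSS} --- and what you have written out is precisely the standard Young-symmetrizer verification behind that citation: factoring $b_\lambda = R\cdot(e-\tau)$ over cosets of a column transposition $\tau$ fixing the monomial gives part (1); the identity $b_\lambda q = \mathrm{sgn}(q)\,b_\lambda$ gives the within-column case of (2); and the exchange of two equal-length columns lies in $P_\lambda$ (equal columns occupy the same rows, so the swap is a product of row transpositions) while normalizing $Q_\lambda$, and since sign is conjugation-invariant this yields $p\,b_\lambda = b_\lambda\,p$ and hence $c_\lambda p = c_\lambda$, as you say. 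The one point worth making explicit is the convention $c_\lambda = a_\lambda b_\lambda$ acting on the left of $W_d^r$: your proof depends on it (with $c_\lambda = b_\lambda a_\lambda$ the relevant identities sit on the other side), but it is the convention the paper actually uses --- see the computation $(c_\lambda\cdot\sigma)\cdot m = -c_\lambda\cdot m$ for a column transposition $\sigma$ in the proof of Theorem \ref{thmmain} --- so your argument is a faithful, self-contained replacement for the external reference rather than a genuinely different route.
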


\begin{proof} This is a special case of Lemma 3.16 in \cite{raiGSS}.
\end{proof}

\begin{definition}\label{def:circledtableaux}
 Let $a,b$ and $I_k^r(a,b)$ as in Definition \ref{def:genericflattenings}, let
\[D = [\a^1,\cdots,\a^k|\b^1,\cdots,\b^k]\cdot z_{\c^{k+1}}\cdots z_{\c^r}\in I_k^r(a,b),\]
and let $\ll$ be a partition of $N$. We let $\c^i=\a^i\cup\b^i$ for $i=1,\cdots,k$, and consider the tableau $T=c_{\ll}\cdot m$ corresponding to the monomial $m=z_{\c^1}\cdots z_{\c^r}$. We represent the element $c_{\ll}\cdot D\in\rm{hwt}_{\ll}(I_k^r(a,b))$ as a tableau $\hat{T}$ with some of the entries circled, obtained from $T$ by circling the entries in the boxes located at positions indexed by the elements of $\a^1,\cdots,\a^k$. Alternatively, since $[\a^1,\cdots,\a^k|\b^1,\cdots,\b^k]=[\b^1,\cdots,\b^k|\a^1,\cdots,\a^k]$, we can circle the entries in the boxes located at positions indexed by the elements of $\b^1,\cdots,\b^k$, and obtain a different tableau with circled entries which represents the same element of $I_k^r(a,b)$. Using the expansion of the determinant
\[\hat{T}=\sum_{\s\in S_k}\rm{sgn}(\s)\cdot T^{\s},\]
where $\rm{sgn}(\s)$ denotes the sign of $\s$, and $T^{\s}$ is the tableau obtained from $T$ by replacing the entries located at positions indexed by elements of $\a^i$ with $\s(i)$, for $i=1,\cdots,k$.
\end{definition}

\begin{example}\label{ex:circledtableaux}
 Let $D$ be as in Example \ref{ex:genericflattenings}, $\a^1=\{1,2\}$, $\a^2=\{4,6\}$, $\a^3=\{5,8\}$, $\b^1=\{3\}$, $\b^2=\{7\}$, and $\b^3=\{9\}$. We get
\[m=z_{1,2,3}\cdot z_{4,6,7}\cdot z_{5,8,9},\]
so that
\[\Yvcentermath1 T = c_{\ll}\cdot m = \young(11123,223,3),\]
and
\[\Yvcentermath1 \hat{T} = c_{\ll}\cdot D = \young(\ccone\ccone1\cctwo\ccthree,\cctwo2\ccthree,3).\]
We have
\[\Yvcentermath1 \hat{T} = T - \young(22113,123,3) - \young(33121,221,3) - \young(11132,322,3) + \young(22131,321,3) + \young(33112,122,3).\]
Notice that all the tableaux pictured above have a repeated entry in one of their columns, hence are equal to zero by Lemma \ref{lem:relstableaux}. This shows that $T=\hat{T}\in I_3^3(2,1)$. This example captures the main trick in our proof of Geramita's conjecture.
\end{example}

\begin{proposition}\label{prop:genericspecial}
Let $W$ denote the $GL(V)$-representation $S_{(r)}(S_{(d)}V)$, let $N=r\cdot d$ and let $W'=W_d^r$ be the $S_N$-representation described above. We fix a partition $\ll$ of $N$ having at most $n=\rm{dim}(V)$ parts. There exist polarization and specialization maps 
\[P_{\ll}:\rm{hwt}_{\ll}(W)\lra\rm{hwt}_{\ll}(W'),\quad Q_{\ll}:\rm{hwt}_{\ll}(W')\lra\rm{hwt}_{\ll}(W),\]
inducing inverse isomorphisms between
\begin{equation}\label{eq:hwtI}
\rm{hwt}_{\ll}(I_k^r(a,b))\simeq\rm{hwt}_{\ll}(I_k(Cat(a,b;n))_r).
\end{equation}
\end{proposition}

\begin{proof}
 This is a special case of Proposition 3.27 in \cite{raiGSS}.
\end{proof}

It follows that in order to show that $I_k(Cat(a,b;n))$ are all the same as long as $a,b\geq 2$, it suffices to prove the corresponding statement in the generic case, i.e. for the representations $I_k^r(a,b)$.

\begin{corollary}[Inheritance, \cite{landsberg}]\label{cor:inheritance}
 Let $k,r\geq 0$ and fix $\ll$ a partition with $t$ parts. The multiplicity of the irreducible representation $S_{\ll}V$ in $I_k(Cat(a,b;n))_r$ is independent of the dimension $n$ of the vector space $V$, as long as $t\leq n$.
\end{corollary}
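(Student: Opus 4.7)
The plan is to derive this immediately from Proposition \ref{prop:genericspecial}. The key observation is that the ``generic'' representation $I_k^r(a,b)\subset W_d^r$ is defined purely combinatorially as a subrepresentation of an $S_N$-module, with no reference whatsoever to the vector space $V$ or its dimension $n$. Consequently, the dimension of its $\ll$-highest weight space $\rm{hwt}_{\ll}(I_k^r(a,b))$ depends only on the data $(k,r,a,b,\ll)$ and not on $n$.

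I would then argue as follows. Let $\ll$ be a partition with exactly $k$ parts, and suppose $k\leq n$. Since the multiplicity of $S_{\ll}V$ in $I_k(Cat(a,b;n))_r$ equals $\dim\,\rm{hwt}_{\ll}(I_k(Cat(a,b;n))_r)$, it suffices to compute the latter. Because $\ll$ has at most $n$ parts, Proposition \ref{prop:genericspecial} applies and furnishes mutually inverse isomorphisms
\[
\rm{hwt}_{\ll}(I_k^r(a,b))\;\simeq\;\rm{hwt}_{\ll}(I_k(Cat(a,b;n))_r).
\]
Hence the multiplicity equals $\dim\,\rm{hwt}_{\ll}(I_k^r(a,b))$, a quantity independent of $n$.

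There is essentially no obstacle: the entire content has been packaged into Proposition \ref{prop:genericspecial}. The only subtlety worth flagging explicitly is the hypothesis check ``$\ll$ has at most $n$ parts,'' which is exactly guaranteed by the assumption $k\leq n$ together with the assumption that $\ll$ has $k$ parts. For $n'\geq n\geq k$ we therefore get the same dimension on both sides, which yields the inheritance statement.
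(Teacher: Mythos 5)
Your proof is correct and is essentially the paper's own route: the corollary is presented as an immediate consequence of Proposition \ref{prop:genericspecial}, with the multiplicity of $S_{\ll}V$ read off as $\dim\,\rm{hwt}_{\ll}(I_k(Cat(a,b;n))_r)\simeq\dim\,\rm{hwt}_{\ll}(I_k^r(a,b))$, the latter being defined purely in terms of $S_N$ and hence independent of $n$. Your explicit check that $k\leq n$ guarantees $\ll$ has at most $n$ parts is exactly the hypothesis needed to invoke the proposition.
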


\begin{proof}
 The left hand side of (\ref{eq:hwtI}) is independent on $n$, and the isomorphism holds as long as $t\leq n$.
\end{proof}

\section{$2\times 2$ Minors}\label{sec2x2}

In this section we give two proofs of the following result of Pucci, which is the case $k=2$ of Conjecture \ref{conj:ger}. The first proof works in arbitrary characteristic, while the second one is a characteristic zero proof meant to illustrate the methods that we shall use in the case of higher minors.

\begin{theorem}[\cite{pucci}]\label{thmpuc} Let $K$ be a field of arbitrary characteristic and let $n,d\geq 2$ be integers. For all $t$ with $1\leq t\leq d-1$ we have
\[I_2(Cat(1,d-1;n))=I_2(Cat(t,d-t;n)).\]
\end{theorem}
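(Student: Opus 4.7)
The plan is to prove the statement by reducing to the ``generic'' case via the strategy of Section \ref{secgeneric}. By Proposition \ref{prop:genericspecial} (applied for each $n$ and varying $r$), it suffices to prove the equality of subrepresentations
\[I_2^r(1,d-1) = I_2^r(t,d-t)\]
inside $W_d^r$ for every $r \geq 2$. Proposition \ref{prop:1flattenings} already identifies $I_2^r(1,d-1)$ with the sum of the isotypic components $[\ll] \subset W_d^r$ for which $\ll$ has at least two parts, so I need to show that $I_2^r(t,d-t)$ equals this same sum.

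The inclusion $I_2^r(t,d-t) \subseteq I_2^r(1,d-1)$ amounts to verifying that the trivial representation $[(N)]$ does not occur in $I_2^r(t,d-t)$. This is immediate from the antisymmetry of a $2 \times 2$ minor: applying the full symmetrizer $c_{(N)}$ to
\[[\a^1,\a^2|\b^1,\b^2]\cdot z_{\c^3}\cdots z_{\c^r} = \bigl(z_{\a^1\cup\b^1}\,z_{\a^2\cup\b^2} - z_{\a^1\cup\b^2}\,z_{\a^2\cup\b^1}\bigr)\cdot z_{\c^3}\cdots z_{\c^r}\]
renders the two terms indistinguishable, and they cancel.

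For the reverse inclusion I would use the circled tableau formalism of Definition \ref{def:circledtableaux}, mimicking Example \ref{ex:circledtableaux}. Fix $\ll \vdash N$ with at least two parts and consider a highest weight vector of $(W_d^r)_\ll$ represented by a tableau $T$ of shape $\ll$ with each of the labels $1,\cdots,r$ occurring $d$ times; by Lemma \ref{lem:relstableaux} I may assume that the columns of $T$ have pairwise distinct entries. Pick a column $c$ of $T$ of height at least $2$ and two boxes $B_1, B_2$ in $c$ carrying distinct labels $i, j$ respectively. Since $1 \leq t \leq d-1 < d$, I can choose a $t$-element subset $\a^1$ of the $i$-labeled boxes of $T$ containing $B_1$, and a $t$-element subset $\a^2$ of the $j$-labeled boxes of $T$ avoiding $B_2$. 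Letting $\b^p$ collect the remaining $p$-labeled boxes for $p=1,2$ and $\c^p$ collect all the $p$-labeled boxes for $p = 3,\cdots,r$, the element
\[D = [\a^1,\a^2|\b^1,\b^2]\cdot z_{\c^3}\cdots z_{\c^r}\]
lies in $I_2^r(t,d-t)$. By Definition \ref{def:circledtableaux}, $\hat{T} := c_\ll \cdot D$ equals $T - T'$, where $T'$ is obtained from $T$ by swapping the labels $i$ and $j$ on the boxes of $\a^1 \cup \a^2$. In $T'$ the box $B_1$ now carries label $j$, while $B_2$ still carries its original label $j$, so column $c$ acquires a repeated entry and $T' = 0$ by Lemma \ref{lem:relstableaux}(1). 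Hence $\hat{T} = T$, proving that $T \in I_2^r(t,d-t)$.

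The only nontrivial input is the combinatorial trick of Example \ref{ex:circledtableaux}: one designs the subsets $\a^1, \a^2$ so that every non-identity term in the determinant expansion vanishes by Lemma \ref{lem:relstableaux}. For a $2 \times 2$ minor there is only one such term to kill, and the hypothesis $1 \leq t \leq d-1$ is used precisely to allow the simultaneous choices $B_1 \in \a^1$ and $B_2 \notin \a^2$. No deeper obstacle is anticipated; the main challenge in adapting this approach to the proof of Theorem \ref{thmmain} in Section \ref{secmain} will be the far more intricate sign pattern of a $3 \times 3$ determinant, which produces six terms in the expansion of $\hat{T}$ rather than two.
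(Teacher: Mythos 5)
Your argument is correct in characteristic zero, and in substance it coincides with the paper's second proof of this theorem: reduce to the generic situation via Proposition \ref{prop:genericspecial}, observe that the trivial representation $[(N)]$ cannot occur in any $I_2(a,b)$, and then show that every tableau $c_{\ll}\cdot m$ with $\ll$ having at least two parts lies in $I_2(t,d-t)$ by circling the entries of a $t$-element subset chosen so that the single off-diagonal term of the $2\times 2$ minor produces a repeated entry in a column and hence vanishes by Lemma \ref{lem:relstableaux}. The only (harmless) difference is that you prove $I_2^r(1,d-1)=I_2^r(t,d-t)$ for every degree $r$, while the paper works only with $r=2$; that suffices because both catalecticant ideals are generated by their degree-two pieces, namely the minors themselves, so your extra generality is not needed, though it is not wrong.

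The genuine gap is the characteristic hypothesis: the theorem is stated over a field of arbitrary characteristic, and the entire apparatus you rely on --- complete reducibility of $W_d^r$, Young symmetrizers and highest weight spaces, Proposition \ref{prop:1flattenings} and Proposition \ref{prop:genericspecial} --- is a characteristic-zero toolkit (the paper's representation-theoretic preliminaries explicitly assume $\mathrm{char}\,K=0$, and in positive characteristic $W_d^r$ need not be semisimple, so even the identification of $I_2^r(1,d-1)$ with a sum of isotypic components breaks down). For this reason the paper gives a separate, characteristic-free proof: the identity \eqref{rel2x2} expresses a $2\times 2$ minor of $Cat(a,b;n)$ as a sum of two $2\times 2$ minors of the adjacent catalecticant $Cat(a+1,b-1;n)$ after splitting the four multisets suitably, and an elementary counting argument (the case analysis on $a\leq 2b-2$ versus $b\leq 2a+2$) shows such a splitting always exists; this gives $I_2(Cat(a,b;n))\subset I_2(Cat(a+1,b-1;n))$ for $1\leq a\leq d-2$ and hence the full chain of equalities. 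To prove the statement as written you would need either such a direct identity among minors, or to restrict your claim to characteristic zero, which is exactly the role the paper assigns to the proof you have reconstructed.
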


\begin{proof}[Proof in arbitrary characteristic] For multisets $m_1,m_2,n_1,n_2$ we let
\[[m_1,m_2|n_1,n_2]=\left|\begin{array}{ccc}
	z_{m_1\cup n_1} & z_{m_1\cup n_2} \\
	z_{m_2\cup n_1} & z_{m_2\cup n_2} \\
\end{array}\right|.
\]
With this notation, we have the following identity for multisets $u_1$, $u_2$, $v_1$, $v_2$, $\a_1$, $\a_2$, $\b_1$, $\b_2$:
\begin{equation}\label{rel2x2}
\begin{split}
[u_1\cup u_2,v_1\cup v_2|\a_1\cup \a_2,\b_1\cup \b_2]&=[u_1\cup \a_1,v_1\cup \b_1|u_2\cup \a_2,v_2\cup \b_2]\\
&+[u_1\cup \b_2,v_1\cup \a_2|v_2\cup \a_1,u_2\cup \b_1].
\end{split}
\end{equation}

We shall prove that $I_2(Cat(a,b;n))\subset I_2(Cat(a+1,b-1;n))$ for $a+b=d$ and $1\leq a\leq d-2$. This is enough to prove the equality of the $2\times 2$ minors of all the catalecticants, since $I_2(Cat(1,d-1;n))=I_2(Cat(d-1,1;n))$. Since the ideal $I_2(Cat(a,b;n))$ is generated by minors $[m_1,m_2|n_1,n_2]$ with $|m_1|=|m_2|=a$ and $|n_1|=|n_2|=b$, it follows from \ref{rel2x2} that it's enough to decompose $m_1,m_2,n_1,n_2$ as
\[m_1=u_1\cup u_2,\quad m_2=v_1\cup v_2,\quad n_1=\a_1\cup \a_2,\quad n_2=\b_1\cup \b_2,\]
in such a way that
\begin{equation}\label{2x2v1}
\begin{split}
|u_1|+|\a_1|=|v_1|+|\b_1|=a+1,&\quad |u_2|+|\a_2|=|v_2|+|\b_2|=b-1,\\
|u_1|+|\b_2|=|v_1|+|\a_2|=b-1,&\quad |v_2|+|\a_1|=|u_2|+|\b_1|=a+1,
\end{split}
\end{equation}
or
\begin{equation}\label{2x2v2}
\begin{split}
|u_1|+|\a_1|=|v_1|+|\b_1|=a+1,&\quad |u_2|+|\a_2|=|v_2|+|\b_2|=b-1,\\
|u_1|+|\b_2|=|v_1|+|\a_2|=a+1,&\quad |v_2|+|\a_1|=|u_2|+|\b_1|=b-1.
\end{split}
\end{equation}

If $a\leq 2b-2$, then we can find $0\leq x,y\leq b-1$ with $x+y=a$. Choose any such $x,y$ and decompose
\[m_1=u_1\cup u_2,\quad m_2=v_1\cup v_2,\quad\rm{with }\ |u_2|=|v_1|=x\ \rm{ and }\ |u_1|=|v_2|=y,\]
and
\[
\begin{split}
n_1=\a_1\cup \a_2,&\quad n_2=\b_1\cup \b_2,\quad\rm{with }\\
|\a_1|=x+1,\ |\b_1|=y+1,&\ |\a_2|=b-1-x\ \rm{ and }\ |\b_2|=b-1-y.
\end{split}
\]
It's easy to see then that (\ref{2x2v1}) is satisfied.

If $b\leq 2a+2$, then since $b\geq 2$ ($a\leq d-2$), we can find $1\leq x,y\leq a+1$ with $x+y=b$. Choose any such $x,y$ and decompose
\[n_1=\a_1\cup \a_2,\quad n_2=\b_1\cup \b_2,\quad\rm{with }\ |\a_2|=|\b_1|=x\ \rm{ and }\ |\a_1|=|\b_2|=y,\]
and
\[
\begin{split}
m_1=u_1\cup u_2,&\quad m_2=v_1\cup v_2,\quad\rm{with }\\
|u_1|=a+1-y,\ |v_1|=a+1-x,&\ |u_2|=y-1\ \rm{ and }\ |v_2|=x-1.
\end{split}
\]
It's easy to see then that (\ref{2x2v2}) is satisfied.

If neither of $a\leq 2b-2$ and $b\leq 2a+2$ holds, then
\[a\geq 2b-1\geq 2(2a+3)-1=4a+5,\]
so $0\geq 3a+5$, a contradiction.
\end{proof}

\begin{proof}[Proof in characteristic zero] By Proposition \ref{prop:genericspecial}, it's enough to treat the ``generic case''. We want to show that for positive integers $a,b$ with $a+b=d$, and $N=2d$, all $S_N$-subrepresentations $I_2^2(a,b)\subset W_d^2=\rm{ind}_{S_d\sim S_2}^{S_N}(\bf{1})$ are the same. Clearly the trivial representation $[(N)]$ is not contained in any $I_2^2(a,b)$, so
\[I_2^2(a,b)\subseteq W_d^2/[(N)]=\bigoplus_{i=1}^{\lfloor d/2\rfloor}[(2\cdot(d-i),2\cdot i)],\ \textrm{ for all }\ a,b\ \rm{ with }\ a+b=d.\]
(see \cite[I.8,~Ex.~6]{macdonald} for the formula of the decomposition of $W_d^2$ into irreducible representations; as the rest of the proof will show, we don't really need the precise description of this decomposition).

We will finish the proof by showing that the above inclusions are in fact equalities for all $a,b$. To see this, it's enough to prove that for any $a,b$ with $a+b=d$, any partition $\ll$ with two parts, and any monomial $m=z_{\a}\cdot z_{\b}$, with $\a\sqcup\b=\{1,\cdots,N\}$, we have $c_{\ll}\cdot m\in I_2^2(a,b)$. Fix then such $a,b,\ll=(\ll_1,\ll_2)$ and $m=z_{\a}\cdot z_{\b}$. 

Recall from Section \ref{secgeneric} that we can identify $c_{\ll}\cdot m$ with a tableau $T$ of shape $\ll$ with $1$'s in the positions indexed by the elements of $\a$, and $2$'s in the positions indexed by the elements of $\b$. Recall also that if $T$ has repeated entries in a column, then $T = 0$. Since permutations within columns of $T$ can only change the sign of $T$, and permutations of the columns of $T$ of the same size don't change the value of $T$ (Lemma \ref{lem:relstableaux}), we can assume that in fact $m=z_{\{1,\cdots,d\}}\cdot z_{\{d+1,\cdots,N\}}$ and
\[\Yvcentermath1 T=c_{\ll}\cdot z_{\{1,\cdots,d\}}\cdot z_{\{d+1,\cdots,N\}}=\young(111\cdots22\cdots,22\cdots).\]
Consider the sets
\[\a_1=\{2,\cdots,a+1\},\ \a_2=\{1,\cdots,d\}\setminus\a_1,\ \b_1\ \rm{ and }\ \b_2=\{d+1,\cdots,N\}\setminus\b_1,\]
where $\b_1$ is any subset with $a$ elements of $\{d+1,\cdots,N\}$ containing $\ll_1+1$. Let $\tilde{T}$ be the tableau obtained from $T$ by circling the boxes corresponding to the entries of $\a_1$ and $\b_1$ (see Definition~\ref{def:circledtableaux}). We have
\[\tilde{T}=c_{\ll}\cdot[\a_1,\b_1|\a_2,\b_2]=c_{\ll}\cdot(z_{\a_1\cup\a_2}\cdot z_{\b_1\cup\b_2}-z_{\a_1\cup\b_2}\cdot z_{\a_2\cup\b_1})=T-T',\]
where $T'$ is a tableau with two equal entries in its first column, i.e. $T'=0$. We get 
\[T=\tilde{T}\in I_2^2(a,b),\]
completing the proof.
\end{proof}

\begin{remark} The characteristic zero case also follows by inheritance (Proposition~\ref{prop:genericspecial} and Corollary~\ref{cor:inheritance}): since all the partitions $\ll$ that show up have at most two parts, it suffices by inheritance to prove the theorem when $n=2$, but in this case all the catalecticant ideals are the same, as remarked in the introduction (\ref{equ:ratlnormalcurve}).
\end{remark}

\section{Geramita's Conjecture}\label{secmain}

We are now ready to give an affirmative answer to Geramita's Question~\ref{que:Geramita} in the Introduction.

\begin{theorem}\label{thmmain} Let $K$ be a field of characteristic $0$ and let $n\geq 2$, $d\geq 4$ be integers. The following statements hold:
\begin{enumerate}
\item For all $t$ with $2\leq t\leq d-2$ we have
\[I_3(Cat(2,d-2;n))=I_3(Cat(t,d-t;n)).\]

\item There is a strict inclusion
\[I_3(Cat(1,d-1;n))\subsetneq I_3(Cat(2,d-2;n)).\]

\end{enumerate}
\end{theorem}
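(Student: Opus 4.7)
The plan is to reduce both parts of the theorem to combinatorial statements about the generic representations $I_3^r(a, b) \subseteq W_d^r$ via Proposition~\ref{prop:genericspecial}, and then verify these statements on highest weight spaces using the circled tableau calculus of Definition~\ref{def:circledtableaux} together with the column-repeat relations of Lemma~\ref{lem:relstableaux}. This extends the characteristic-zero proof of Pucci's theorem given in Section~\ref{sec2x2}.

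For part (1), the symmetry $I_3^r(a, d-a) = I_3^r(d-a, a)$ (from $Cat(a,b;n)^T = Cat(b,a;n)$), combined with the forward inclusions $I_3^r(a, d-a) \subseteq I_3^r(a+1, d-a-1)$ for $2 \le a \le d-3$, yields equality of all middle $I_3^r(t, d-t)$ for $2 \le t \le d-2$. Thus the task reduces to proving each forward inclusion generically. Given a partition $\lambda \vdash N$ and a generator $D = [\alpha^1, \alpha^2, \alpha^3 \mid \beta^1, \beta^2, \beta^3] \cdot z_{\gamma^4} \cdots z_{\gamma^r}$ with $|\alpha^i| = a$, $|\beta^i| = d-a$, the strategy is a \emph{transfer}: pick elements $x^i \in \beta^i$ for $i = 1, 2, 3$ and form
\[
D' = [\alpha^1 \cup \{x^1\}, \alpha^2 \cup \{x^2\}, \alpha^3 \cup \{x^3\} \mid \beta^1 \setminus \{x^1\}, \beta^2 \setminus \{x^2\}, \beta^3 \setminus \{x^3\}] \cdot z_{\gamma^4} \cdots z_{\gamma^r} \in I_3^r(a+1, d-a-1).
\]
Both $c_\lambda \cdot D$ and $c_\lambda \cdot D'$ expand into sums indexed by $S_3$ sharing the identity-term tableau $T = c_\lambda \cdot z_{\gamma^1} \cdots z_{\gamma^r}$ (where $\gamma^i = \alpha^i \cup \beta^i$). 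If the $x^i$'s are chosen so that every nontrivial $\sigma \in S_3$ produces a column repeat under both the $(a, d-a)$- and $(a+1, d-a-1)$-circlings, then Lemma~\ref{lem:relstableaux} kills all correction terms and one obtains $c_\lambda \cdot D = T = c_\lambda \cdot D'$. The main obstacle is the combinatorial task of finding such $x^i$'s: for $\lambda$ with at least three parts, the three distinct entries in the first column of $T$ provide enough room (generalizing Example~\ref{ex:circledtableaux}); the delicate case is $\lambda$ with at most two parts, where only length-$2$ columns can carry repeats, and one must exploit $|\beta^i| = d - a \ge 2$ along with the column-reordering freedom of Lemma~\ref{lem:relstableaux}(2).

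For part (2), Proposition~\ref{prop:1flattenings} identifies $I_3^r(1, d-1)$ as the sum of irreducibles $[\lambda]$ with $\lambda$ having at least three parts, so extending the transfer argument of part (1) to $a = 1$ --- a case in which only partitions with $\ge 3$ parts contribute nontrivially --- yields the inclusion $I_3^r(1, d-1) \subseteq I_3^r(2, d-2)$. Strictness then reduces to exhibiting a two-part partition $\lambda$ with $\mathrm{hwt}_\lambda(I_3^r(2, d-2)) \ne 0$. This follows by specializing to $n = 2$: for $d \ge 4$, the matrix $Cat(2, d-2; 2)$ has size $3 \times (d-1)$ with $d-1 \ge 3$, and by~\eqref{equ:ratlnormalcurve} its ideal of $3 \times 3$ minors defines the first secant variety of the rational normal curve of degree $d$, a proper subvariety of $\mathbb{P}^d$, so $I_3(Cat(2, d-2; 2)) \ne 0$. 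Since all $GL(K^2)$-representations are indexed by partitions with at most two parts, Proposition~\ref{prop:genericspecial} yields a nonzero two-part hwt space in $I_3^r(2, d-2)$, absent from $I_3^r(1, d-1)$, giving the strict inclusion for every $n \ge 2$.
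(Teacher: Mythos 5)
Your reduction to the generic case, your circled-tableau computation for partitions with at least three parts, and your use of the $n=2$ specialization for the strictness in (2) all match the paper's strategy. The genuine gap is in part (1) for partitions $\lambda$ with \emph{two} rows. Your transfer mechanism proves $c_\lambda\cdot D\in I_3^r(a+1,b-1)$ only when $c_\lambda\cdot D$ collapses to the single tableau $T$, i.e.\ when every nontrivial $\sigma\in S_3$ term in the expansion of $[\a^1,\a^2,\a^3|\b^1,\b^2,\b^3]$ acquires a column repeat. But the choice of the elements $x^i$ affects only $D'$, not $D$: whether the correction terms of $c_\lambda\cdot D$ vanish is forced by $D$ and $\lambda$ alone. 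When $\lambda$ has two rows, only length-$2$ columns can carry repeats, and for a general generator $D$ the permuted tableaux have no such repeats, so $c_\lambda\cdot D$ is an honest linear combination of several nonzero tableaux and the transfer does not apply. You would then need either a genuine $3\times 3$ analogue of the straightening identity used for $2\times 2$ minors, or a proof that the special elements which do collapse already span $\rm{hwt}_\lambda(I_3^r(a,b))$ --- neither is supplied, and the latter is essentially the hard content of the two-row case, since there the $\lambda$-part of $I_3^r(a,b)$ is a \emph{proper} subspace of the $\lambda$-part of $W_d^r$, so one cannot argue (as in the three-row case) by showing every tableau lies in the larger ideal.

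The paper closes exactly this case by a different route: by Proposition \ref{prop:genericspecial} and inheritance (Corollary \ref{cor:inheritance}), the two-row isotypic pieces are detected already at $n=2$, where all middle catalecticants are Hankel matrices and the equality of their ideals of minors is the classical fact \eqref{equ:ratlnormalcurve}; the combinatorial circling argument is only needed for three-row $\lambda$, where it shows the $\lambda$-part of $I_3(t,d-t)$ fills up all of $W_d^3$ (which also gives the inclusion in (2) via Proposition \ref{prop:1flattenings}). You invoke the $n=2$ result only for strictness in (2); importing it also for the two-row pieces in (1) would repair your argument. Two smaller points: it suffices to work with $r=3$ (the ideals are generated by their $3\times3$ minors, so equality in degree $3$ suffices), which simplifies your ``general $r$'' setup; and even in the three-row regime the shape $\lambda=(3d-2,1,1)$, whose second column has length $1$, needs separate treatment --- the paper disposes of it by showing its entire isotypic component in $W_d^3$ vanishes, a case your ``enough room in the first column'' argument does not cover.
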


\begin{corollary}\label{thmmaincor} Any of the ideals $I_3(Cat(t,d-t;n))$, $2\leq t\leq d-2$, is the ideal of the secant line variety of the $d$-th Veronese embedding of $\bb{P}_K^{n-1}$.
\end{corollary}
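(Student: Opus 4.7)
The plan is to reduce to the generic case via Proposition \ref{prop:genericspecial}, turning the comparison of catalecticant ideals into the comparison of the subrepresentations $I_3^r(a, d-a) \subset W_d^r$ for various $r$ and $a$. In this language, Part (1) asserts that the highest-weight spaces $\rm{hwt}_\ll(I_3^r(a, d-a))$ coincide for every $\ll \vdash N = rd$, every $r \geq 3$, and every $a$ with $2 \le a \le d-2$, while Part (2) asserts that for $a = 1$ these spaces can be strictly smaller.

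For Part (1), I would split the analysis according to the number of parts of $\ll$. The trivial representation $\ll = (N)$ contributes nothing to any $I_3^r(a, d-a)$, since determinants are alternating. When $\ll$ has exactly two parts, I would invoke Proposition \ref{prop:genericspecial} at $n = 2$: the isomorphism $\rm{hwt}_\ll(I_3^r(a, d-a)) \simeq \rm{hwt}_\ll(I_3(Cat(a, d-a; 2))_r)$ applies because $\ll$ has at most $n = 2$ parts, and for $2 \le a \le d-2$ the $n=2$ catalecticant ideals coincide by the classical rational normal curve identity (\ref{equ:ratlnormalcurve}). The heart of the proof is the case where $\ll$ has at least three parts: here I would establish the stronger statement $\rm{hwt}_\ll(I_3^r(a, d-a)) = \rm{hwt}_\ll(W_d^r)$ by generalizing the circling trick of Example \ref{ex:circledtableaux}. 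Given a tableau $T = c_\ll \cdot (z_{\c^1} \cdots z_{\c^r})$, I must exhibit splittings $\c^i = \a^i \sqcup \b^i$ with $|\a^i| = a$, $|\b^i| = d - a$ for $i = 1, 2, 3$, such that the circled tableau $\hat{T} = c_\ll \cdot [\a^1, \a^2, \a^3 | \b^1, \b^2, \b^3] \cdot z_{\c^4} \cdots z_{\c^r}$ equals $T$; concretely, each of the five tableaux obtained from $T$ by non-identity permutations of the circle labels must acquire a repeated entry in some column and thus vanish by Lemma \ref{lem:relstableaux}. The natural heuristic is to concentrate the circled boxes in the leftmost columns of $\ll$, exploiting the columns of height $\ge 2$ so that any transposition of circle labels is forced to place two equal entries into a common column.

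For Part (2), Proposition \ref{prop:1flattenings} identifies $I_3^r(1, d-1)$ with the sum of the $\ll$-isotypic components of $W_d^r$ over all $\ll$ with at least three parts, so the strict inclusion is equivalent to exhibiting a partition with at most two parts that appears in $I_3^r(2, d-2)$ for some $r$. I would produce such a partition by specializing to $n = 2$: the matrix $Cat(1, d-1; 2)$ has only two rows, so $I_3(Cat(1, d-1; 2)) = 0$, whereas for $d \ge 4$ the ideal $I_3(Cat(2, d-2; 2))$ is nonzero (it contains the determinant of the middle $3 \times 3$ Hankel minor). Applying Proposition \ref{prop:genericspecial} at $n = 2$ yields a two-part $\ll$ with $\rm{hwt}_\ll(I_3^r(2, d-2)) \ne 0$; a second application of the same proposition for arbitrary $n \ge 2$ then shows that $[\ll]$ contributes nontrivially to $I_3(Cat(2, d-2; n))_r$ while remaining absent from $I_3(Cat(1, d-1; n))_r$, giving the strict inclusion for every $n \ge 2$.

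The principal obstacle I anticipate is the combinatorial construction in the at-least-three-parts case of Part (1): designing a uniform recipe for the splittings $\c^i = \a^i \sqcup \b^i$ that works for every such shape $\ll$ and every $a$ with $2 \le a \le d-2$, and then verifying by casework that each of the five non-identity circle permutations actually produces a column repetition. Once Theorem \ref{thmmain} is established, Corollary \ref{thmmaincor} is immediate: Kanev's theorem identifies the ideal of $\s_1(\rm{Ver}_d(\bb{P}^{n-1}))$ with $I_3(Cat(1, d-1; n)) + I_3(Cat(2, d-2; n))$; Part (2) shows that the first summand is contained in the second, so the sum collapses to $I_3(Cat(2, d-2; n))$; and Part (1) identifies this common ideal with every $I_3(Cat(t, d-t; n))$ for $2 \le t \le d-2$.
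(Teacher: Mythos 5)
Your proposal is correct and takes essentially the same route as the paper: the corollary is obtained exactly as in the text, by combining Kanev's theorem (the ideal of the secant line variety is generated by the $3\times 3$ minors of the first and second catalecticants) with Theorem \ref{thmmain}, whose proof you sketch along the paper's own lines (reduction to the generic case via Proposition \ref{prop:genericspecial}, inheritance plus the $n=2$ rational normal curve identity (\ref{equ:ratlnormalcurve}) for partitions with at most two parts, and the circled-tableau argument of Example \ref{ex:circledtableaux} for partitions with at least three parts, with strictness in part (2) detected at $n=2$). The only detail left open, the explicit choice of splittings making the five permuted tableaux vanish, is precisely the construction carried out in the paper's proof of Theorem \ref{thmmain}.
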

\begin{proof} This follows from \cite[Theorem 3.3(ii)]{kanev} and Theorem \ref{thmmain}. 
\end{proof}

\begin{proof}[Proof of Theorem \ref{thmmain}] To prove (1), it suffices by Proposition \ref{prop:genericspecial} to show that $I_3^3(2,d-2)=I_3^3(t,d-t)\subset W_d^3$ for $2\leq t\leq d-2$. The $\ll$-highest weight spaces of all $I_3^3(t,d-t)$, $2\leq t\leq d-2$, are the same when $\ll$ has at most two parts. This follows by inheritance: combine Proposition \ref{prop:genericspecial} with the fact that the theorem is known when $n=2$ (\ref{equ:ratlnormalcurve}). We shall prove that when $\ll$ has three parts, the $\ll$-isotypic component of $I_3^3(t,d-t)$ is equal to the $\ll$-isotypic component of $W_d^3$ for all $t$ with $1\leq t\leq d-1$ (we already know this when $t=1$, by Proposition \ref{prop:1flattenings}). This will imply (1) and the inclusion of (2). The reason why this inclusion is strict for $d\geq 4$ is because it is already strict when $n=2$, and because inheritance holds for catalecticant ideals.

Consider a partition $\ll=(\ll_1,\ll_2,\ll_3)$ with $3$ parts, a monomial $m\in W_d^3$ with corresponding tableau $T=c_{\ll}\cdot m$, and integers $2\leq a\leq b$ with $a+b=d$. We shall prove that $T\in I_3^3(a,b)$. We will see that if $\ll$ has only one entry in the second column, then $T=0$, so let's assume this isn't the case for the moment. We may also assume that $T$ has no repeated entries in a column (Lemma~\ref{lem:relstableaux}). Since permuting the numbers $1,2,3$ in the tableau $T$ doesn't change $T$, and permutations within the columns of $T$ preserve $T$ up to sign, we may assume that $T$ contains the subtableau
\[\young(11,22,3)\]
in its first two columns (there may or may not be a third box in the second column of $\ll$).

It follows that $m=z_{\c_1}z_{\c_2}z_{\c_3}$, with $\c_1=\{1,2,\cdots\}$, $\c_2=\{\ll_1+1,\ll_1+2,\cdots\}$ and $\c_3=\{\ll_1+\ll_2+1,\cdots\}$, $|\c_i|=d$. Consider subsets $\a_i\subset\c_i$, $|\a_i|=a$ satisfying the conditions
\[1,2\in\a_1,\quad\ll_1+1\in\a_2,\ll_1+2\notin\a_2,\quad\ll_1+\ll_2+1\notin\a_3,\]
and let $\b_i=\c_i\setminus\a_i$, for $i=1,2,3$. Let $\tilde{T}$ be the tableau obtained from $T$ by circling the entries of $\a_1,\a_2,\a_3$, so that $\tilde{T}\in I_3^3(a,b)$. $\tilde{T}$ looks like
\[\Yvcentermath1 \tilde{T}=\young(\ccone\ccone\cdots,\cctwo2\cdots,3\cdots).\]
We get
\[\tilde{T}=c_{\ll}\cdot [\a_1,\a_2,\a_3|\b_1,\b_2,\b_3]=T+\sum_{j=1}^5\pm T_j,\]
where each $T_j$ is a tableau with repeated entries in one of its first two columns (i.e. $T_j=0$). It follows that
\[T = \tilde{T}\in I_3^3(a,b),\]
as in Example \ref{ex:circledtableaux}, which is what we wanted to prove.

To see that $T=c_{\ll}\cdot m=0$ for all monomials $m$ when $\ll=(3d-2,1,1)$, it suffices to notice that if $\s$ is the transposition of the $(3d-1)$-st and $3d$-th boxes of $\ll$ (the $2$nd and $3$rd boxes in the first column of $\ll$), then $\s(T)=c_{\ll}\cdot(\s\cdot m)$ and $T$ are the same up to permutations of columns size $1$ (and permutations of the entries of the alphabet $\mc{A}=\{1,2,3\}$). It follows that
\[c_{\ll}\cdot m = c_{\ll}\cdot(\s\cdot m) = (c_{\ll}\cdot\s)\cdot m=-c_{\ll}\cdot m,\]
so that $T=c_{\ll}\cdot m=0$, as desired. Alternatively, it follows from \cite[I.8,~Ex.~9]{macdonald} that $[\ll]$ does not appear in the decomposition of $W_d^3$ into irreducible representations, so $c_{\ll}\cdot W_d^3=0$.
\end{proof}

\section*{Acknowledgments} 
I would like to thank David Eisenbud for his guidance throughout the project, Charley Crissman for numerous conversations on the subject, and Tony Geramita, Tony Iarrobino and Bernd Sturmfels for helpful comments and suggestions. I also thank Dan Grayson and Mike Stillman for making Macaulay2 \cite{M2}, which served as a good source of inspiration. This work was partially supported by National Science Foundation Grant No. 0964128.


	\begin{bibdiv}
		\begin{biblist}

\bib{bru-her}{book}{
   author={Bruns, Winfried},
   author={Herzog, J{\"u}rgen},
   title={Cohen-Macaulay rings},
   series={Cambridge Studies in Advanced Mathematics},
   volume={39},
   publisher={Cambridge University Press},
   place={Cambridge},
   date={1993},
   pages={xii+403},
   isbn={0-521-41068-1},
   review={\MR{1251956 (95h:13020)}},
}

\bib{bucz-bucz}{article}{
    AUTHOR = {Buczy\'nska, Weronika},
    AUTHOR = {Buczy\'nski, Jaros\l aw},
     TITLE = {Secant varieties to high degree Veronese reembeddings, catalecticant matrices and smoothable Gorenstein schemes},
      YEAR = {2010},
      note = {To appear in Journal of Algebraic Geometry},
      journal = {arXiv},
      number = {1012.3563},
}

\bib{bgl}{article}{
    AUTHOR = {Buczy\'nski, Jaros\l aw},
    AUTHOR = {Ginensky, Adam},
    AUTHOR = {Landsberg, J. M.},
     TITLE = {Determinantal Equations for Secant Varieties and the Eisenbud-Koh-Stillman Conjecture},
      YEAR = {2010},
      note = {To appear in Journal of London Mathematical Society},
      journal = {arXiv},
      number = {1007.0192},
}

\bib{conca}{article}{
   author={Conca, Aldo},
   title={Straightening law and powers of determinantal ideals of Hankel
   matrices},
   journal={Adv. Math.},
   volume={138},
   date={1998},
   number={2},
   pages={263--292},
   issn={0001-8708},
   review={\MR{1645574 (99i:13020)}},
   doi={10.1006/aima.1998.1740},
}

\bib{eis-detl}{article}{
   author={Eisenbud, David},
   title={Linear sections of determinantal varieties},
   journal={Amer. J. Math.},
   volume={110},
   date={1988},
   number={3},
   pages={541--575},
   issn={0002-9327},
   review={\MR{944327 (89h:14041)}},
   doi={10.2307/2374622},
}

\bib{eis-CA}{book}{
   author={Eisenbud, David},
   title={Commutative algebra},
   series={Graduate Texts in Mathematics},
   volume={150},
   note={With a view toward algebraic geometry},
   publisher={Springer-Verlag},
   place={New York},
   date={1995},
   pages={xvi+785},
   isbn={0-387-94268-8},
   isbn={0-387-94269-6},
   review={\MR{1322960 (97a:13001)}},
}

\bib{ful-har}{book}{
   author={Fulton, William},
   author={Harris, Joe},
   title={Representation theory},
   series={Graduate Texts in Mathematics},
   volume={129},
   note={A first course;
   Readings in Mathematics},
   publisher={Springer-Verlag},
   place={New York},
   date={1991},
   pages={xvi+551},
   isbn={0-387-97527-6},
   isbn={0-387-97495-4},
   review={\MR{1153249 (93a:20069)}},
}

\bib{ger-inversesystems}{article}{
   author={Geramita, Anthony V.},
   title={Inverse systems of fat points: Waring's problem, secant varieties
   of Veronese varieties and parameter spaces for Gorenstein ideals},
   conference={
      title={The Curves Seminar at Queen's, Vol.\ X},
      address={Kingston, ON},
      date={1995},
   },
   book={
      series={Queen's Papers in Pure and Appl. Math.},
      volume={102},
      publisher={Queen's Univ.},
      place={Kingston, ON},
   },
   date={1996},
   pages={2--114},
   review={\MR{1381732 (97h:13012)}},
}

\bib{geramita}{article}{
   author={Geramita, Anthony V.},
   title={Catalecticant varieties},
   conference={
      title={Commutative algebra and algebraic geometry (Ferrara)},
   },
   book={
      series={Lecture Notes in Pure and Appl. Math.},
      volume={206},
      publisher={Dekker},
      place={New York},
   },
   date={1999},
   pages={143--156},
   review={\MR{1702103 (2000f:14075)}},
}

\bib{gruson-peskine}{article}{
   author={Gruson, Laurent},
   author={Peskine, Christian},
   title={Courbes de l'espace projectif: vari\'et\'es de s\'ecantes},
   language={French},
   conference={
      title={Enumerative geometry and classical algebraic geometry (Nice,
      1981)},
   },
   book={
      series={Progr. Math.},
      volume={24},
      publisher={Birkh\"auser Boston},
      place={Mass.},
   },
   date={1982},
   pages={1--31},
   review={\MR{685761 (84m:14061)}},
}

\bib{M2}{article}{
          author = {Grayson, Daniel R.},
          author = {Stillman, Michael E.},
          title = {Macaulay 2, a software system for research
                   in algebraic geometry},
          journal = {Available at \url{http://www.math.uiuc.edu/Macaulay2/}}
        }

\bib{kanev-iarrobino}{book}{
   author={Iarrobino, Anthony},
   author={Kanev, Vassil},
   title={Power sums, Gorenstein algebras, and determinantal loci},
   series={Lecture Notes in Mathematics},
   volume={1721},
   note={Appendix C by Iarrobino and Steven L. Kleiman},
   publisher={Springer-Verlag},
   place={Berlin},
   date={1999},
   pages={xxxii+345},
   isbn={3-540-66766-0},
   review={\MR{1735271 (2001d:14056)}},
}

\bib{kanev}{article}{
   author={Kanev, Vassil},
   title={Chordal varieties of Veronese varieties and catalecticant matrices},
   note={Algebraic geometry, 9},
   journal={J. Math. Sci. (New York)},
   volume={94},
   date={1999},
   number={1},
   pages={1114--1125},
   issn={1072-3374},
   review={\MR{1703911 (2001b:14078)}},
}

\bib{landsberg}{book}{
   author = {Landsberg, J. M.},
   title = {Tensors: Geometry and Applications},
   series = {Graduate Studies in Mathematics},
   volume = {128},
   publisher = {American Mathematical Society},
   date = {2012}
}

\bib{LO}{article}{
    AUTHOR = {Landsberg, J. M.},
    AUTHOR = {Ottaviani, Giorgio},
     TITLE = {Equations for secant varieties of Veronese and other varieties},
      YEAR = {2011},
      note = {To appear in Annali di Matematica Pura e Applicata},
      journal = {arXiv},
      number = {1111.4567},
}

\bib{macdonald}{book}{
   author={Macdonald, I. G.},
   title={Symmetric functions and Hall polynomials},
   series={Oxford Mathematical Monographs},
   edition={2},
   note={With contributions by A. Zelevinsky;
   Oxford Science Publications},
   publisher={The Clarendon Press Oxford University Press},
   place={New York},
   date={1995},
   pages={x+475},
   isbn={0-19-853489-2},
   review={\MR{1354144 (96h:05207)}},
}

\bib{pucci}{article}{
   author={Pucci, Mario},
   title={The Veronese variety and catalecticant matrices},
   journal={J. Algebra},
   volume={202},
   date={1998},
   number={1},
   pages={72--95},
   issn={0021-8693},
   review={\MR{1614174 (2000c:14071)}},
   doi={10.1006/jabr.1997.7190},
}

\bib{raicu}{book}{
   author={Raicu, Claudiu},
   title={Secant Varieties of Segre-Veronese Varieties},
   note={Thesis (Ph.D.)--University of California, Berkeley},
   publisher={ProQuest LLC, Ann Arbor, MI},
   date={2011},
   pages={99},
   isbn={978-1124-88977-1},
   review={\MR{2942194}},
}

\bib{raiGSS}{article}{
    AUTHOR = {Raicu, Claudiu},
     TITLE = {Secant varieties of Segre--Veronese varieties},
     journal={Algebra \& Number Theory},
     volume={6},
     date={2012},
     number={8},
     pages={1817--1868},
     doi={10.2140/ant.2012.6.1817},
}

\bib{SS}{article}{
    AUTHOR = {Sidman, Jessica},
    AUTHOR = {Smith, Gregory},
     TITLE = {Linear determinantal equations for all projective schemes},
     journal={Algebra \& Number Theory},
     volume={5},
     date={2011},
     number={8},
     pages={1041--1061},
     doi={10.2140/ant.2011.5.1041},
}

\bib{weyman}{book}{
   author={Weyman, Jerzy},
   title={Cohomology of vector bundles and syzygies},
   series={Cambridge Tracts in Mathematics},
   volume={149},
   publisher={Cambridge University Press},
   place={Cambridge},
   date={2003},
   pages={xiv+371},
   isbn={0-521-62197-6},
   review={\MR{1988690 (2004d:13020)}},
   doi={10.1017/CBO9780511546556},
}

		\end{biblist}
	\end{bibdiv}

\end{document}